\newtheorem{theorem}{Theorem}[section]
 \newtheorem{corollary}[theorem]{Corollary}
 \newtheorem{lemma}[theorem]{Lemma}
 \newtheorem{proposition}[theorem]{Proposition}
 \newtheorem{definition}[theorem]{Definition}
 \newtheorem{remark}[theorem]{Remark}
 \newcommand{\eps}{\varepsilon}
 \newcommand{\norm}[1]{\Vert#1\Vert}
 \newcommand{\normm}[1]{\Vert|#1\Vert|}
 \newcommand{\abs}[1]{\left\vert#1\right\vert}
 \newcommand{\set}[1]{\left\{#1\right\}}
 \newcommand{\inner}[1]{\left(#1\right)}
 \newcommand{\comi}[1]{ \left<#1 \right>}
\def\ps@pprintTitle{%
     \let\@oddhead\@empty
     \let\@evenhead\@empty
     \def\@oddfoot{\reset@font\hfil\thepage\hfil}
     \let\@evenfoot\@oddfoot}
\begin{document}

\begin{frontmatter}

\title
 {Global Well-posedness of a Prandtl Model from MHD in Gevrey Function Spaces}

  \author[ad1,ad2]{Wei-Xi Li}
\ead{wei-xi.li@whu.edu.cn}

  \author[ad1]{Rui Xu}
\ead{xurui218@whu.edu.cn}

\author[ad3]{Tong Yang}
  \ead{matyang@cityu.edu.hk}

\address[ad1]{School of Mathematics and Statistics, Wuhan University, Wuhan
430072, China}
\address[ad2]{Hubei Key Laboratory of Computational Science, Wuhan University, 430072 Wuhan, China}

\address[ad3]{
	Department of Mathematics, City University of Hong Kong, Hong Kong
  \bigskip\\
{\it \large Dedicated to Professor Banghe Li on the occasion of his 80th birthday}}

 \vspace{.5in}
  
\begin{abstract}
We consider  a Prandtl model derived from MHD in the Prandtl-Hartmann regime that
has a damping term due to the effect of the Hartmann boundary layer. A global-in-time well-posedness is obtained in the Gevrey function space with the optimal index  $2$.   The proof is based on a cancellation mechanism through some auxiliary functions from the study of the Prandtl equation and an observation about the structure of the loss of one order tangential derivatives through twice operations of the  Prandtl operator.  
\end{abstract}

\begin{keyword}
Magnetic Prandtl equation, Gevrey function space, global well-posedness, auxiliary functions, 
loss of derivative
\MSC[2020] 76W05,35M33, 35Q35 
\end{keyword}

\end{frontmatter}

 \tableofcontents
 
\section{Introduction and main result}

We study a Prandtl type system  with a damping term induced by a Hartmann magnetic field
that was derived by G\'erard-Varet and Prestipino  in
\cite{MR3657241}.  Suppose that the fluid domain is $\mathbb R_+^n=\big\{(x,y)\in\mathbb R^n;\ x\in\mathbb R^{n-1}, y>0\big\} $. Denote by $ \boldsymbol{u}$ and $v$ the tangential and normal components of the velocity fields.    Then the dimensionless   magnetic Prandtl model in $\mathbb R_+^n $    is
given by
\begin{equation}\label{dpe+}
	\left\{
\begin{aligned}
&(\partial_t   +  \boldsymbol{u}\cdot\partial_x +v\partial_{y} -\partial_{y}^2) \boldsymbol{u} +\partial_xp+ \boldsymbol{u}-\boldsymbol{U} =0,\\
&\partial_x\cdot  \boldsymbol{u}+\partial_{y}v=0,\\
&(  \boldsymbol{u}, v)|_{y=0}=(  \boldsymbol{0}  , 0),  \quad \lim_{y\rightarrow+\infty}  \boldsymbol{u}= \boldsymbol{U},\\
& \boldsymbol{u}|_{t=0}= \boldsymbol{u}_0,
\end{aligned}
\right.
\end{equation}
where $\partial_xp$ and $ \boldsymbol{U}$ are the traces of a given Euler flow satisfying  the Bernoulli law
\begin{eqnarray*}
	\inner{\partial_t  + \boldsymbol{U}\cdot \partial_x }\boldsymbol{U}+\partial_xp=0.
\end{eqnarray*}
System \eqref{dpe+} is derived from   the  incompressible MHD system in the mixed Prandtl/Hartmann regime, where the  leading order  equations are  \begin{equation}\label{leading}
		\left\{
\begin{aligned}
&(\partial_t   +  \boldsymbol{u}\cdot\partial_x +v\partial_{y} -\partial_{y}^2) \boldsymbol{u} +\partial_xp=\partial_y \boldsymbol{b},\\
&\partial_y\boldsymbol{u}+\partial_y^2\boldsymbol{b}=0,\\
&\partial_x\cdot  \boldsymbol{u}+\partial_{y}v=0,\\
&(  \boldsymbol{u}, v)|_{y=0}=(  \boldsymbol{0}  , 0),  \quad \lim_{y\rightarrow+\infty}  (\boldsymbol{u}, \boldsymbol{b}) = (\boldsymbol{U}, \boldsymbol{B}), \\
& \boldsymbol{u}|_{t=0}= \boldsymbol{u}_0,
\end{aligned}
\right.
\end{equation}
where $\boldsymbol{b}$ stands for  the tangential magnetic field satisfying that $\partial_y \boldsymbol{b}$ trends to $0$ as $y\rightarrow+\infty.$ By   the second   equation  in \eqref{leading}  and the boundary conditions of $(\boldsymbol{u}, \boldsymbol{b})$,  the magnetic fields $ \boldsymbol{b}$ can be determined in terms of $\boldsymbol{u}$; that is, 
\begin{eqnarray*}
	\partial_y\boldsymbol{b}=-(\boldsymbol{u}-
	\boldsymbol{U}), \quad  \boldsymbol{b}=\boldsymbol{B}+\int_y^{+\infty}(\boldsymbol{u}-\boldsymbol{U}) dy.   
\end{eqnarray*}
Then system \eqref{leading} reduces to \eqref{dpe+}. 
We refer to  
\cite{MR3657241} for the detailed argument.   Compared with the classical Prandtl system,  there is an extra  damping term in \eqref{dpe+};  this does not lead to any additional difficulty in the local-in-time existence and uniqueness theory.  Hence,  the local  well-posedness theories of  the classical Prandtl  system  in Sobolev or Gevrey function spaces established in  \cite{MR3327535,MR3925144,MR3385340,lmy}     hold  for \eqref{dpe+}.   
On the other hand,   we can expect a global-in-time    solution to \eqref{dpe+}
because of the damping effect. In fact, some results in this direction were obtained
in    Xie-Yang \cite{MR3873032} and  Chen-Ren-Wang-Zhang \cite{MR4169212}  in the analytic and Sobolev spaces, respectively, about the 2D global stability of the Hartmann layer that satisfies 
  Oleinik's monotonicity condition.   
   This paper aims to study the  global-in-time property of the system  for general data without any structural assumption. 

Compared with the local-in-time well-posedness theory  (see, for instance,  \cite{MR3327535, MR3795028,   MR3600083, MR3925144,  MR1476316, MR2601044, MR3429469, MR2849481,MR3461362, MR3284569,MR3493958, MR4055987,MR2020656, MR3710703, 2022arXiv220110139Y, MR3464051}   and references therein), the global-in-time property of the Prandtl system is much less   known so that  it is far from  being well understood.  Here we refer to   an early work on    weak solutions by Xin-Zhang \cite{MR2020656},
and a work
on analytic solutions by Paicu-Zhang \cite{MR4271962} and the improvement to Gevrey class 2 by Wang-Wang-Zhang \cite{2021arXiv210300681Y}. We also mention  some other related works \cite{MR3461362, MR3710703, MR3464051}.  For the Prandtl system with 
a suitable background magnetic field, the local solutions in Sobolev or Gevrey function spaces were obtained  in  \cite{MR3882222,MR4342301,MR4102162,MR4270479}, and the global   analytic solution was established  recently in  \cite{ MR4213671,MR4293727}.    
Note that all of these global-in-time existence results are in 2D setting under  some suitable structural conditions on the initial data.   Hence, the  global-in-time properties of these systems in 3D setting remain  unknown.

On the other hand, there have been some recent studies on the global well-posedness of   hydrostatic Navier-Stokes equations (also called hydrostatic Prandtl equations); these  can be used to describe a large scale motion in  atmospheric and oceanic sciences, and  are derived  as
 a   limit  of the Navier-Stokes equations in a very thin domain.  The hydrostatic Navier-Stokes equations  have the same degeneracy structure  as  in   the classical Prandtl system, 
 so that  the analytic regularity is sufficient to obtain the  local well-posedness theory for general initial data without any structural assumption. Moreover, due to the damping effect, by combining  the  vertical diffusion and the Poincar\'e inequality in the vertical interval,   the global-in-time property of the hydrostatic Navier-Stokes equations  was recently verified  by  Paicu-Zhang-Zhang \cite{MR4125518}.   However,   differently from the classical Prandtl equations, the analytic regularity is necessary for the well-posedness theory of  the hydrostatic Prandtl equations without any structural assumption (cf. Renardy's work \cite{MR2563627}).  Hence,  in order to investigate the well-posedness theory in a larger function space  than  analytic,  some  structural conditions are required. We mention that the  well-posedness in Sobolev space
 of  the hydrostatic Navier-Stokes equations remains   unknown. Under the convex assumption,  Masmoudi-Wong \cite{MR2898740} established the well-posedness in a Sobolev space
   of the hydrostatic  Euler equations which is  the inviscid version of the hydrostatic  Navier-Stokes equation. However, under the same   convex assumption, the    well-posedness has been proved  for the hydrostatic  Navier-Stokes equation only in the Gevrey function space by
G\'{e}rard-Varet-Masmoudi-Vicol in \cite{MR4149066}, with
 the Gevrey index  being up to $9/8$, and in \cite{arXiv:2205.15829, 2021arXiv2206.03873} up to  $3/2$ that is believed to be optimal.  Furthermore,   the global well-posedness theory of the hyperbolic version  of the 2D hydrostatic Navier-Stokes equation   was obtained recently by Aarach   \cite{2021arXiv211113052A}  and  Paicu-Zhang \cite{2021arXiv211112836P} in analytic and Gevrey function spaces, respectively, with the  extension to the 3D case in   \cite{2022arXiv220409173L}.   A similar well-posedness 
 result on the hyperbolic Prandtl equations in Gevrey class was proven in \cite{2021arXiv211210450L}.   These  results  imply that  the hyperbolic feature may lead to 
 some kind of stability effect  compared with the parabolic counterparts. 
 
 This work aims to combine  some kind of  intrinsic hyperbolic structure   with the extra magnetic damping term   to derive the global-in-time well-posedness for general Gevrey classes with sharp index 2  initial data without any structural assumption.  More precisely, inspired by \cite{lmy,MR3925144},  we introduce some  auxiliary functions to cancel the nonlocal terms involving the loss of tangential derivatives, and then investigate the intrinsic hyperbolic feature for   evolution equations of the auxiliary functions.  

In the   discussion that follows, we assume that  $ \boldsymbol{ U}= \boldsymbol{ 0}$ and  that $\partial_xp= \boldsymbol{ 0}$. We expect the approach  can be applied to the case with a more general  outer flow satisfying some  suitable conditions;  that is, we consider    system \eqref{dpe+} as
\begin{equation}\label{dpe}
	\left\{
\begin{aligned}
&(\partial_t   +  \boldsymbol{u}\cdot\partial_x +v\partial_{y} -\partial_{y}^2) \boldsymbol{u} +  \boldsymbol{u}=0,\\
&\partial_x\cdot  \boldsymbol{u}+\partial_{y}v=0,\\
&( \boldsymbol{u}, v)|_{y=0}=( \boldsymbol{0}, 0),  \quad \lim_{y\rightarrow+\infty}  \boldsymbol{u}= \boldsymbol{ 0},\\
& \boldsymbol{u}|_{t=0}= \boldsymbol{u}_0.
\end{aligned}
\right.
\end{equation}
Note \eqref{dpe} is a degenerate parabolic system with a loss of tangential derivatives in the nonlocal normal velocity given by
\begin{eqnarray*}
	v(t,x,y)=-\int_0^y\partial_x\cdot \boldsymbol{u}(t,x,\tilde y)d\tilde y 
\end{eqnarray*}
as     the main degeneracy feature of  the     Prandtl type equations.  

\medskip
 \noindent {\bf Notations.}  In what follows, we will use  $\norm{\cdot}_{L^2}$ and $\inner{\cdot, \cdot}_{L^2}$ to denote the norm and inner product of  $L^2=L^2(\mathbb R_+^n)$,   and use the notation   $\norm{\cdot}_{L_x^2}$ and $\inner{\cdot, \cdot}_{L_x^2}$  when the variable $x$ is specified. Similar notation  will be used for $L^\infty$. In addition, we use $L^p_x L^q_y = L^p (\mathbb R^{n-1}; L^q(\mathbb R_+))$ for the classical Sobolev space.  For a vector-valued function $A=(A_1,A_2, \ldots, A_k)$, we use  the convention that $\norm{A}_{L^2}^2=\sum_{1\leq j\leq k}\norm{A_j}_{L^2}^2$. Throughout the paper,   $\comi y:=(1+y^2)^{1/2}$.
 
   \begin{definition} 
\label{defgev} 	Let $\ell>1/2$   be a given number and let $\tau=\tau_N$ be a given weight function defined by 
 \begin{equation}\label{tauweigh}
 \tau(y)=	\tau_N(y)=\big(N+y^2\big)^{1/2},
 \end{equation}
 with $N\geq 1$ a fixed integer depending only on $\ell$ such that
 \begin{equation}\label{rel:Nell}
 \frac{\ell+3}{N^{1/2}}+  \frac{\ell^2+\ell}{N}\leq \frac{1}{8}. 
\end{equation}
The space $X_{r}$ of  Gevrey functions with the Gevrey index $2$ consists of all  smooth  (scalar or vector-valued) functions
 $h$  such that the  norm  $\norm{h}_{X_{r}}<+\infty,$  where  
 \begin{eqnarray*}
\begin{aligned}
	 \norm{h}_{X_r  }^2= & \sum_{0\leq j \leq 3}  \sum_{ \alpha\in\mathbb Z_+^{n-1}}  L_{r,\abs{\alpha}+j}^2 \norm{\tau^{\ell+j}\partial_{x}^\alpha \partial_y^j  h }_{L^2}^2,
	 \end{aligned}
\end{eqnarray*}
with   
 \begin{equation} \label{radiu}
  	L_{r,k}=
  	\frac{r^{k+1}(k+1)^{10}}{ (k!)^2}, \quad k\in\mathbb Z_+, \ r>0.
   \end{equation}
  \end{definition}

\begin{theorem}\label{thmmagprandlt} Let the dimension be $n=2$ or $3$, and let
  the initial datum $ \boldsymbol{u}_0$ in \eqref{dpe} belong  to $X_{2\rho_0}$ for some $ \rho_0>0$, compatible with the boundary condition in \eqref{dpe}.   Suppose that 
  \begin{eqnarray*}
  	\norm{ \boldsymbol{u}_0}_{X_{2\rho_0}}\leq \eps_0
  \end{eqnarray*}
for some sufficiently  small $\eps_0>0$. 	
	  Then  the magnetic Prandtl model \eqref{dpe} admits a unique global solution $ \boldsymbol{u}\in L^\infty\inner{[0,+\infty[; X_{\rho}}$ with
	\begin{equation}\label{derho}
		\rho=\rho(t)=\frac{\rho_0}{2}+\frac{\rho_0}{2}e^{-t/12}.
	\end{equation}
	Moreover,
	\begin{eqnarray*}
	\sup_{t\geq 0}	e^{t/4} \norm{ \boldsymbol{u}(t)}_{X_{\rho(t)}}+\left(\int_0^{+\infty}e^{t/2} \norm{\partial_y  \boldsymbol{u}(t)}_{X_{\rho(t)}} ^2dt \right)^{1/2} \leq \frac{4(\rho_0+1)}{\rho_0}  \eps_0. 
\end{eqnarray*}
\end{theorem}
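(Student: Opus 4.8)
\section*{Proof proposal}

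The plan is to run a standard continuation argument: local existence and uniqueness in $X_{\rho}$ follow from the cited Gevrey theory for the Prandtl system (the damping term $\boldsymbol u$ being harmless on short time intervals), so the real content is an a priori estimate that is global in time and that propagates the smallness of $\norm{\boldsymbol u_0}_{X_{2\rho_0}}$. I would work directly with the weighted quantities $\boldsymbol u_{\alpha,j}:=\tau^{\ell+j}\partial_x^\alpha\partial_y^j\boldsymbol u$ that build the norm in Definition \ref{defgev}: apply $\tau^{\ell+j}\partial_x^\alpha\partial_y^j$ to \eqref{dpe}, pair with $\boldsymbol u_{\alpha,j}$ in $L^2$, multiply by $L_{\rho,\abs\alpha+j}^2$ and sum over $0\le j\le 3$ and all $\alpha$. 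Because the Gevrey radius $\rho=\rho(t)$ in \eqref{derho} is time dependent, differentiating the coefficients \eqref{radiu} produces the crucial negative ``radius--shrinkage'' contribution proportional to $\frac{\dot\rho}{\rho}\sum(\abs\alpha+j+1)L_{\rho,\abs\alpha+j}^2\norm{\boldsymbol u_{\alpha,j}}_{L^2}^2$; since $\dot\rho<0$ this supplies exactly one extra power of $\abs\alpha+j$ with a favourable sign. The vertical diffusion $-\partial_y^2$ yields the dissipation $\norm{\partial_y\boldsymbol u}_{X_\rho}^2$ (the weight commutators generated by $\tau^{\ell+j}$ being absorbed thanks to the constraint \eqref{rel:Nell} on $N$), while the damping term contributes the coercive $\norm{\boldsymbol u}_{X_\rho}^2$ that drives the exponential decay.

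The heart of the matter is the transport and nonlocal terms $\boldsymbol u\cdot\partial_x\boldsymbol u$ and $v\partial_y\boldsymbol u$ with $v=-\int_0^y\partial_x\cdot\boldsymbol u$. The genuinely dangerous contribution appears when all tangential derivatives fall on $v$, producing $(\partial_x^\alpha v)\,\partial_y\boldsymbol u$ in which $\partial_x^\alpha v$ carries $\abs\alpha+1$ tangential derivatives. In Gevrey class $2$ the ratio $L_{\rho,\abs\alpha}/L_{\rho,\abs\alpha+1}\sim(\abs\alpha+1)^2/\rho$ means this single lost derivative costs a factor $(\abs\alpha+1)^2$, of which the radius--shrinkage term only pays for one power; this quadratic gap is precisely why the naive analytic-type argument fails at index $2$. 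Following the auxiliary-function philosophy of \cite{lmy,MR3925144}, I would not estimate $\partial_x^\alpha\boldsymbol u$ directly but introduce a good unknown designed to cancel the leading nonlocal term, so that the evolution of the auxiliary function no longer sees the top-order part of $(\partial_x^\alpha v)\partial_y\boldsymbol u$.

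The main obstacle, which I expect to occupy the bulk of the argument, is to control the residual first-order loss still carried by the auxiliary system. The mechanism I would exploit is the observation flagged in the abstract: applying the Prandtl operator $\partial_t+\boldsymbol u\cdot\partial_x+v\partial_y-\partial_y^2$ twice reveals that the surviving loss of one tangential derivative assembles into a second-order, hyperbolic-in-$(t,x)$ structure rather than a parabolic one. Concretely, the bad term should reorganize into a wave-type quantity whose energy can be bounded using the extra power of $\abs\alpha+j$ furnished by the shrinkage term together with the dissipation $\norm{\partial_y\boldsymbol u}_{X_\rho}^2$; this is the quantitative form of the hyperbolic stabilization mentioned in the introduction. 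Verifying that the commutators between this hyperbolic reorganization, the weights $\tau^{\ell+j}$, and the summation in $\alpha$ are all summable against the available negative terms is the delicate step, and it is where the exponent $10$ in \eqref{radiu} and the smallness \eqref{rel:Nell} are actually consumed.

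Finally, I would collect everything into a differential inequality of the schematic form
\begin{eqnarray*}
\frac{d}{dt}\norm{\boldsymbol u}_{X_\rho}^2+\frac12\norm{\boldsymbol u}_{X_\rho}^2+\norm{\partial_y\boldsymbol u}_{X_\rho}^2\le C\norm{\boldsymbol u}_{X_\rho}\Big(\norm{\boldsymbol u}_{X_\rho}\norm{\partial_y\boldsymbol u}_{X_\rho}+\norm{\boldsymbol u}_{X_\rho}^2\Big),
\end{eqnarray*}
valid as long as $\norm{\boldsymbol u}_{X_\rho}$ stays small. A continuity/bootstrap argument then shows that the smallness $\norm{\boldsymbol u_0}_{X_{2\rho_0}}\le\eps_0$ is preserved, the cubic right-hand side is absorbed into the linear dissipative terms, and one extracts the decay $e^{t/4}\norm{\boldsymbol u(t)}_{X_{\rho(t)}}\lesssim\eps_0$ together with the integrated bound on $\norm{\partial_y\boldsymbol u}_{X_\rho}$. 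The specific rates $1/4$ and $1/12$ arise from balancing the damping against the shrinkage strength $\abs{\dot\rho}/\rho$, and the fact that $\rho(t)\ge\rho_0/2>0$ for all $t$ is exactly what upgrades the solution from local to global.
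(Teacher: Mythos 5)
Your proposal follows essentially the same route as the paper: an a priori estimate in the Gevrey-2 norm with shrinking radius $\rho(t)$, auxiliary functions in the spirit of \cite{lmy,MR3925144} to cancel the top-order nonlocal term $(\partial_x^\alpha v)\partial_y\boldsymbol{u}$, the key observation that the residual one-derivative loss has a hyperbolic structure under twice application of the Prandtl operator (the paper implements this concretely via $\partial_x^{m+1}\lambda = P\partial_x^m\mathcal U + K_m$ and the convexity $\rho''-\rho'^2/\rho\ge 0$ of the radius), and a final bootstrap/continuity argument. The only cosmetic difference is that your closing schematic inequality is written in $\norm{\boldsymbol u}_{X_\rho}$ alone, whereas the paper necessarily closes the estimate on the triple $(\boldsymbol u,\mathcal U,\boldsymbol\lambda)$ with the cubic terms absorbed partly by the dissipation and partly by the $\rho'^3$ shrinkage term, but this is consistent with the plan you describe.
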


 \begin{remark} Note that the same argument shows that 
  	 the global well-posedness property  holds  for  a Gevrey function space with the Gevrey index in the interval $[1, 2]$. 
  \end{remark}

\section{A priori estimate in 2D}\label{sec2}
To have a clear presentation,  we give a detailed proof of  Theorem \ref{thmmagprandlt} in 2D.   For $n=2$, we have the scalar tangential velocity $u$, and 
then the  magnetic Prandtl model \eqref{dpe} can be written as
\begin{equation}
	\label{2dprandtl}
		\left\{
\begin{aligned}
&(\partial_t   +u\partial_x +v\partial_{y} -\partial_{y}^2)u + u=0,\\
&\partial_xu+\partial_{y}v=0,\\
&(u, v)|_{y=0}=(  0, 0),  \quad \lim_{y\rightarrow+\infty} u=0 ,\\
&  u |_{t=0}= u_0.
\end{aligned}
\right.
\end{equation}
 The key part  is to derive  an {\it  a  priori} estimate for   \eqref{2dprandtl}  so that  the existence and uniqueness stated in Theorem \ref{thmmagprandlt}   follows from  a standard argument.  Hence,  for brevity, we only present the proof of the following {\it a priori} estimates for solutions to \eqref{2dprandtl} with Gevrey regularity:  

\begin{theorem}
	\label{th:ape}
	Let $X_\rho$ be the Gevrey space given in Definition \ref{defgev}. 
	Suppose that the initial datum $u_0$ in \eqref{2dprandtl} belongs to  $X_{2\rho_0}$ for some $ \rho_0>0$, and let $u\in L^\infty \inner{[0,+\infty[;\  X_{ \rho}}$ be any solution to \eqref{2dprandtl} satisfying that 
     \begin{equation}\label{apasu}
  \int_0^\infty \big(\norm{u(t)}_{X_{\rho(t)}}^2+\norm{\partial_yu(t)}_{X_{\rho(t)}}^2\big) dt<+\infty, 
  \end{equation}
where  $\rho$ is defined by  \eqref{derho}. 
 If 
	 \begin{equation}
	    	\label{smallness}
	    	 \norm{u_0}_{X_{2\rho_0}}\leq \varepsilon_0
	    \end{equation}
for some sufficiently small $\varepsilon_0>0,$ then
\begin{eqnarray*}
	\sup_{t\geq 0}	e^{t/4} \norm{ u(t)}_{X_{\rho(t)}}+\left(\int_0^{+\infty}e^{t/2} \norm{\partial_y   u(t)}_{X_{\rho(t)}} ^2dt\right)^{1/2}  \leq \frac{4(\rho_0+1)}{\rho_0}  \eps_0. 
\end{eqnarray*}
\end{theorem}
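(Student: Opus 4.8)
The plan is to reduce everything to a single differential inequality for a time-weighted Gevrey energy and then close it by a continuity argument using the smallness \eqref{smallness}. Write $\omega=\partial_yu$ and recall that the nonlocal normal velocity $v=-\int_0^y\partial_xu\,d\tilde y$ carries one extra tangential derivative, which is the \emph{only} source of loss in \eqref{2dprandtl}. First I would introduce, with $\rho$ as in \eqref{derho}, the functionals
\[
\mathcal E(t)=e^{t/2}\norm{u(t)}_{X_{\rho(t)}}^2,\qquad \mathcal D(t)=e^{t/2}\norm{\partial_yu(t)}_{X_{\rho(t)}}^2,
\]
so that the asserted bound is exactly $\sup_t\sqrt{\mathcal E(t)}+(\int_0^\infty\mathcal D)^{1/2}\le\frac{4(\rho_0+1)}{\rho_0}\eps_0$. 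Differentiating $\norm{u}_{X_{\rho(t)}}^2$ in time produces three competing contributions: the dissipation $-2\mathcal D$ from $-\partial_y^2$ together with the damping $-2\norm{u}_{X_\rho}^2$ from the zeroth order term $+u$; a strictly \emph{negative} radius term proportional to $\dot\rho<0$, which through \eqref{radiu} gains a power of the tangential order on each mode; and the nonlinear commutators from $u\partial_x$ and $v\partial_y$. The exponent $1/2$ is taken below the damping rate $2$ so that a margin $-\tfrac32\norm{u}_{X_\rho}^2$ survives, which both absorbs lower order errors and yields the sharp decay $e^{t/4}$ on the norm.

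The core is the top tangential mode. Applying $\partial_x^\alpha\partial_y^j$ to \eqref{2dprandtl} and using $\partial_yv=-\partial_xu$, every commutator can be controlled by lower order Gevrey norms except the single term $(\partial_x^\alpha v)\,\partial_yu=-\big(\int_0^y\partial_x^{\abs\alpha+1}u\,d\tilde y\big)\partial_yu$ coming from $v\partial_y$, which contains one more tangential derivative than the quantity being estimated. Measured in the weights \eqref{radiu}, the ratio $L_{\rho,\abs\alpha}/L_{\rho,\abs\alpha+1}\approx(\abs\alpha+1)^2/\rho$ shows this term costs two powers of $(\abs\alpha+1)$, whereas the radius term $\dot\rho$ supplies a gain of only one such power. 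That is just enough to reach Gevrey index $3/2$ but one power short of the critical index $2$, which is precisely why a genuine cancellation, rather than the radius gain alone, is required here.

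To remove this term I would follow the auxiliary-function strategy of \cite{lmy,MR3925144}: introduce a corrected unknown of the form $g_\alpha=\partial_x^\alpha\partial_yu-\tfrac{\partial_y\varphi}{\varphi}\partial_x^\alpha u$ built from an explicit $x$-independent profile $\varphi$ adapted to the damped heat part of \eqref{2dprandtl}, chosen so that the worst term in the $g_\alpha$ equation cancels the worst term produced by $(\partial_x^\alpha v)\partial_yu$ after one integration by parts in $y$ against the weight $\tau^{\ell+j}$. The leftover does not vanish outright but, by the paper's structural observation, it only resurfaces after the Prandtl operator $\partial_t+u\partial_x+v\partial_y-\partial_y^2$ is applied a \emph{second} time; tracking the loss through these two applications exhibits it as the off-diagonal coupling of a symmetric, wave-type system in $(\partial_x^\alpha u,\,g_\alpha)$, where the two top order terms enter with opposite signs and cancel in the combined energy. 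This intrinsic hyperbolic feature, together with $\dot\rho<0$ and the magnetic damping $+u$, is what upgrades the estimate from Gevrey $3/2$ to the optimal index $2$. I expect verifying this cancellation, i.e.\ that the doubly-applied operator really yields a skew/symmetric pairing with no surviving top order contribution, to be the main obstacle.

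Finally I would sum the weighted estimates over $0\le j\le3$ and all $\alpha$. The fixed large $N$ of \eqref{rel:Nell} ensures that $\tau^{\ell+j}=(N+y^2)^{(\ell+j)/2}$ dominates the commutators from $[\partial_y^2,\tau^{\ell+j}]$ and the Hardy-type bounds used to estimate the nonlocal factor $\int_0^y\partial_x^{\abs\alpha+1}u$ by $\partial_yu$, while carrying four normal derivatives ($j\le3$) gives the room to turn every trace on $y=0$ into interior dissipation. The nonlinear contributions then all take the schematic form $C\,\norm{u(t)}_{X_{\rho(t)}}(\mathcal E+\mathcal D)$, so that \eqref{smallness} lets them be absorbed and produces $\frac{d}{dt}\mathcal E+c\,\mathcal E+\mathcal D\le C\,\norm{u}_{X_\rho}(\mathcal E+\mathcal D)$. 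A continuity/bootstrap argument on $[0,T]$, legitimate thanks to \eqref{apasu} and using that the initial radius $2\rho_0$ contracts to $\rho(t)\ge\rho_0/2$, then propagates the smallness for all time and delivers the stated bound with constant $4(\rho_0+1)/\rho_0$.
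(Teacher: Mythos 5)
Your identification of the problematic term $(\partial_x^\alpha v)\partial_y u$ and of the need for a cancellation beyond the radius-shrinking gain is correct, but the cancellation you propose does not work, and it is exactly the step you flag as ``the main obstacle.'' If you set $g_\alpha=\partial_x^\alpha\partial_y u-\frac{\partial_y\varphi}{\varphi}\partial_x^\alpha u$ with an $x$-independent profile $\varphi$, the equation for $g_\alpha$ inherits the loss-of-derivative terms $-(\partial_x^\alpha v)\,\partial_y^2 u$ (from the equation for $\partial_x^\alpha\partial_y u$) and $+\frac{\partial_y\varphi}{\varphi}(\partial_x^\alpha v)\,\partial_y u$ (from the equation for $\partial_x^\alpha u$); these cancel only if $\partial_y^2u/\partial_y u=\partial_y\varphi/\varphi$, i.e.\ only if $\partial_y u(t,x,y)=c(t,x)\varphi(t,y)$. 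That is a shear-flow/monotonicity-type structural hypothesis, which is precisely what the theorem avoids; for general data no fixed profile exists. This is why the paper does not use an algebraic good unknown at all: it defines $\mathcal U$ \emph{dynamically}, as the solution of the auxiliary parabolic problem \eqref{mau} with source $-\partial_x^3 v$, and sets $\lambda=\partial_x^3u-(\partial_y u)\int_0^y\mathcal U\,d\tilde y$ as in \eqref{ma}. The cancellation then occurs because both factors $\partial_y u$ and $\int_0^y\mathcal U\,d\tilde y$ evolve under the same Prandtl operator, so that the operator applied to their product reproduces $(\partial_y u)(-\partial_x^3 v)$, which exactly cancels the worst commutator term in the equation for $\partial_x^3 u$.

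A second, related gap is that your energy contains only $u$, so it cannot close: the bootstrapped quantity in the paper is $|\vec a|_{X_\rho}^2$ with $\vec a=(u,\mathcal U,\lambda)$, augmented by the extra nonnegative term $\sum_m\rho'^2\frac{(m+4)^2}{\rho^2}L_{\rho,m+3}^2\norm{\partial_x^m\mathcal U}_{L^2}^2$. Moreover, the ``hyperbolic'' structure is exploited not for the pair $(\partial_x^\alpha u, g_\alpha)$ but for $(\mathcal U,\partial_x\lambda)$, and not as a pointwise skew-symmetric pairing with no surviving top-order contribution: in Proposition \ref{prpaux} one writes $\partial_x^{m+1}\lambda=P\partial_x^m\mathcal U+K_m$ (with $P$ the Prandtl operator plus damping), bounds the dangerous $\rho'$-weighted $\lambda$ term by $\rho'\norm{P\partial_x^m\mathcal U}_{L^2}^2$, and expands; this produces the cubic-in-$\rho'$ gain $\rho'^3\frac{(m+4)^3}{\rho^3}L_{\rho,m+3}^2\norm{\partial_x^m\mathcal U}_{L^2}^2$, which is absorbed by the $Y_\rho$-dissipation, plus a total time derivative that is added to the energy --- the top-order term survives but with a sign and weight that the $Y_\rho$ norm can dominate. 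So while your outline names the right abstract ingredients (cancellation, loss of one derivative under two applications of the operator, bootstrap), the concrete construction at its core would fail without structural assumptions, and the quantitative mechanism that replaces it is materially different from what you describe.
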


\subsection{Methodologies and   auxiliary functions} The main difficulty for the well-posedness of Prandtl type equations comes from the    loss of  tangential derivatives.  To overcome the  tangential  degeneracy,  the abstract Cauchy-Kowalewski Theorem is an effective method, for example, 
for the local  existence and uniqueness in an analytic setting, cf.  \cite{MR1617542}.  
 However, it is  not trivial to relax the analyticity regularity to a larger function space such as
  Gevrey space for well-posedness. For this,   some intrinsic  structure of the system needs to be used, cf. \cite{MR3925144, lmy}.   As was shown in \cite{2022arXiv220409173L,2021arXiv211112836P,2021arXiv211210450L},   the well-posedness is well expected in the Gevrey class rather than the analytic setting for the hyperbolic  Prandtl equations without any structural assumption. This indicates that  the   hyperbolic feature may act as a stabilizing factor for the Gevrey well-posedness theory.   In this paper,   with the extra damping term in magnetic Prandtl model,    we will prove the global-in-time existence in a Gevrey function space by exploring the intrinsic hyperbolic feature for auxiliary functions. 
    
 In order to clarify the stability effect of  the hyperbolic feature,  let us use   the following toy model  with a hyperbolic factor $\partial_t^2$ to illustrate   the main idea in the proof:
\begin{equation} \label{toymo+1}
	\partial_t^2 h+ h\partial_xh-\partial_y^2h=0, \quad h|_{t=0}=h_0 \ \textrm{ and }\ \partial_t h|_{t=0}=h_1,
\end{equation}
where  one order $x$-derivative is lost  in twice-time differentiation.  
  Denoting that $g=\partial_th$, the above Cauchy problem can be rewritten as
\begin{equation}
	\label{toymodel}
	\left\{
	\begin{aligned}
	&	\partial_t h=g,\\
	&	\partial_tg-h\partial_xh-\partial_y^2h=0,\\
	&(h,g)|_{t=0}=(h_0,h_1).
	\end{aligned}
	\right. 
\end{equation}  
To overcome the loss of the $x$-derivative,  we introduce a decreasing function of radius$\rho=\rho(t)$. In what follows, $\rho$ depends on time $t$, but we only write it as $\rho$ instead of $\rho(t)$ for  simplicity of the notations. Moreover,  we denote by $\rho'$ and $\rho''$ the first and the second time derivatives of   $\rho,$ respectively.   
Now we derive   estimates on the   Gevrey  norm 
\begin{eqnarray*}
\sum_{m=0}^{+\infty}	\frac{\rho^{m+1}}{m!^2}\inner{\norm{\partial_x^mh}_{L^2}+\norm{ \partial_x^m \partial_yh}_{L^2}+\norm{ \partial_x^m g}_{L^2 }},
\end{eqnarray*}
where  $(h,g)$ solves the Cauchy problem \eqref{toymodel}. 
By direct  calculation and using the fact that  $\rho'\leq 0,$ we have that
\begin{align*}
&	\frac{1}{2}\frac{d}{dt}\bigg(\frac{\rho^{2(m+1)}}{m!^4}\Big(\norm{\partial_x^mh}_{L^2}^2+\norm{ \partial_x^m \partial_yh}_{L^2}^2+\norm{ \partial_x^m g}_{L^2}^2\Big)\bigg)\\
&=\rho'\frac{m+1}{\rho}\frac{\rho^{2(m+1)}}{m!^4}\inner{\norm{\partial_x^mh}_{L^2}^2+\norm{ \partial_x^m \partial_yh}_{L^2}^2+\norm{ \partial_x^m g}_{L^2}^2} +\frac{\rho^{2(m+1)}}{m!^4}  \big(h \partial_x^{m+1} h,\  \partial_x^m g\big)_{L^2}  +\textrm{ l.o.t.}\\
&\leq \rho'\frac{m+1}{\rho}\frac{\rho^{2(m+1)}}{m!^4}  \inner{ \norm{ \partial_x^m \partial_yh}_{L^2}^2+\norm{ \partial_x^m g}_{L^2}^2} \\
&\quad+  \frac{(m+1)^2}{\rho}\norm{h}_{L^\infty}\bigg(\frac{\rho^{  m+2}}{(m+1)!^2} \norm{\partial_x^{m+1} h}_{L^2}\bigg) \bigg(\frac{\rho^{ m+1}}{m!^2} \norm{  \partial_x^m g}_{L^2} \bigg)+\textrm{ l.o.t.},
\end{align*}
 where l.o.t. refers to the lower-order terms that can be  controlled directly.  Moreover, we have ( cf.  Subsection  \ref{subseaux} for details) that
\begin{eqnarray*}
	 \rho'\frac{m+1}{\rho}\frac{\rho^{2(m+1)}}{m!^4}  \norm{ \partial_x^m g}_{L^2}^2\leq \rho'^3 \frac{(m+1)^3}{\rho^3}\frac{\rho^{2(m+1)}}{m!^4}  \norm{\partial_{x}^{m}h}_{L^2}^2 +\textrm{ l.o.t.}.
	 \end{eqnarray*}
In summary, 
	\begin{eqnarray*}
	\begin{aligned}
& \frac{1}{2}\frac{d}{dt}\bigg(\frac{\rho^{2(m+1)}}{m!^4}\inner{\norm{\partial_x^mh}_{L^2}^2+\norm{\partial_x^m\partial_yh}_{L^2}^2+\norm{ \partial_x^mg}_{L^2}^2}\bigg)\\
&\qquad  \leq \frac{1}{2}\rho'\frac{m+1}{\rho}\frac{\rho^{2(m+1)}}{m!^4}  \inner{ \norm{ \partial_x^m \partial_yh}_{L^2}^2+\norm{ \partial_x^m g}_{L^2}^2}+\frac12 \rho'^3 \frac{(m+1)^3}{\rho^3}\frac{\rho^{2(m+1)}}{m!^4}  \norm{\partial_{x}^{m}h}_{L^2}^2\\
&\qquad\qquad+ \frac{1}{2}\norm{h}_{L^\infty}\frac{ m+1 }{\rho} \frac{\rho^{2( m+1)}}{m!^4} \norm{  \partial_x^m g}_{L^2}^2 +\frac{1}{2}\norm{h}_{L^\infty}\frac{(m+1)^3}{\rho^3} \frac{\rho^{ 2( m+2)}}{(m+1)!^4} \norm{\partial_x^{m+1} h}_{L^2}^2+\textrm{ l.o.t.}. 
\end{aligned}
\end{eqnarray*}
If we define a norm $\normm{\cdot}_{Y_\rho}$ by
\begin{equation}\label{trinorm}
	\normm{h}_{Y_\rho}^2=\sum_{m= 0}^{+\infty}\frac{m+1}{\rho}\frac{\rho^{2(m+1)}}{m!^4}  \inner{ \norm{ \partial_x^m \partial_yh}_{L^2}^2+\norm{ \partial_x^m \partial_th}_{L^2}^2} + \sum_{m= 0}^{+\infty}\frac{(m+1)^3}{\rho^3}\frac{\rho^{2(m+1)}}{m!^4}  \norm{\partial_{x}^{m}h}_{L^2}^2,
\end{equation}   
then it follows from the above inequality that  \begin{multline*}
	 \frac{1}{2}\frac{d}{dt}\bigg(\sum_{m= 0}^{+\infty}\frac{\rho^{2(m+1)}}{m!^4}\inner{\norm{\partial_x^mh}_{L^2}^2+\norm{ \partial_x^m \partial_yh}_{L^2}^2+\norm{ \partial_x^mg}_{L^2}^2}\bigg)\\
	 \leq \frac{1}{2}\Big( \max\big\{\rho', \rho'^3\big\}+\norm{h}_{L^\infty}\Big)\normm{h}_{Y_\rho}^3 +\textrm{ l.o.t.}\leq \textrm{ l.o.t.},
\end{multline*}
 provided  that $\norm{h}_{L^\infty}$ is bounded by  $|\max\big\{\rho', \rho'^3\big\}|$ because $\rho'<0$. 
 
Differently from  the hyperbolic toy model \eqref{toymo+1}, the magnetic Prandtl model is a parabolic initial-boundary problem. 
 If we perform  estimates for the 2D magnetic Prandtl model \eqref{2dprandtl} directly,   the above
 energy estimate cannot   be closed in the Gevrey norm $\norm{\cdot}_{X_\rho}$.   In order to overcome the loss of  derivative difficulty, as in \cite{MR3925144, lmy},  some anxilliary functions are needed. More precisely, 
for a solution   $u\in L^\infty \inner{[0,+\infty[;\  X_{ \rho}}$ to \eqref{2dprandtl} satisfying the conditions in Theorem \ref{th:ape}, let 
 $\mathcal U$  be  a solution to the   problem 
\begin{eqnarray}\label{mau}
\left\{
\begin{aligned}
& \big(\partial_t+u\partial_x +v\partial_y-\partial_y^2\big)    \int_0^y\mathcal U  d\tilde y+\int_0^y\mathcal U  d\tilde y  =  -\partial_x^3 v,\\
& \mathcal U|_{t=0}=0, \quad \partial_y\mathcal U|_{y=0}=\mathcal U|_{y\rightarrow+\infty}=0.
\end{aligned}
\right.
\end{eqnarray} 
The existence of $\mathcal U$  follows from the standard parabolic theory. In fact, one can
 first apply the existence theory for  linear parabolic equations to  construct a solution $f$ to the   initial-boundary problem
\begin{eqnarray}\label{f}
	\left\{
\begin{aligned}
& \big(\partial_t+u\partial_x +v\partial_y-\partial_y^2\big)    f+f=  -\partial_x^3 v,\\
&f|_{t=0}=0, \quad f|_{y=0}=\partial_yf|_{y\rightarrow+\infty}=0,
\end{aligned}
\right.
\end{eqnarray}
and then set $f=\int_0^y \mathcal U(t,x,\tilde y) d\tilde y$. Moreover, under condition \eqref{apasu},   we use the parabolic regularity theory   to conclude that, for $\ell>1/2 $ and for any $m\geq0,$
\begin{equation}\label{reg:mu}
\left\{
\begin{aligned}
	 & \comi y^{-\ell} \partial_x^m\int_0^y \mathcal U(t,x,\tilde y) d\tilde y =\comi y^{-\ell}\partial_x^m f\in L^2\inner{[0,+\infty[; L^2},\\
	& \partial_x^m  \mathcal U = \partial_y\partial_x^m f\in L^2\inner{[0,+\infty[; L_x^2  H_y^1 }.
	\end{aligned}
	\right.
\end{equation}
	The above auxiliary functions are slightly different from those introduced
	in  \cite{lmy} which were inspired by  \cite{MR3925144}.  As in \cite{lmy}. by  $\mathcal U$ and 
 \begin{eqnarray}\label{ma}
  \lambda=\partial_x^3u- (\partial_yu)\int_0^y\mathcal U(t,x,\tilde y)  d\tilde y,
\end{eqnarray}  
we can cancel the  term involving  $v$ with  the  highest tangential derivative
as shown in  Subsection \ref{subseaux}.  The two auxiliary functions 
have the   relation
 \begin{eqnarray}\label{ymau}
\begin{aligned}
\big(\partial_t+u\partial_x +v\partial_y-\partial_y^2\big) \mathcal U +\mathcal U  
=&\partial_x\lambda +(\partial_x\partial_yu)\int_0^y\mathcal U(t,x,\tilde y)  d\tilde y+(\partial_xu)\mathcal U.
\end{aligned}
\end{eqnarray}
In addition, 
\begin{equation}\label{lateq}
	\big(\partial_t+u\partial_x +v\partial_y-\partial_y^2\big) \partial_x \lambda +\partial_x\lambda 
= -4(\partial_xu)\partial_x^4 u-3(\partial_x^3v)+\textrm{ l.o.t.}.
\end{equation}
Recall that the main structure of  \eqref{toymo+1} or \eqref{toymodel} is that  the loss of the one order $x$-derivative occurs  in the twice-time differentiation. 
If   $\mathcal U$  behaves like the 3-order $x$-derivative of $u$,  then
  \eqref{ymau}-\eqref{lateq}  admit a similar structure as shown in    toy model \eqref{toymodel}. More precisely, we have the loss of one order $x$-derivative in the twice application of the  Prandtl operator instead of a time differentiation so that the pair $(\mathcal U, \partial_x\lambda)$    plays a similar role as $(h, g)$ in \eqref{toymodel}.      Inspired by the triple norm defined in \eqref{trinorm},  we define the   norms on the solutions and the auxiliary functions as follows:

\begin{definition}\label{defabnorm}
Let $\mathcal U, \lambda$ be defined as in \eqref{mau} and \eqref{ma}, respectively. 	By denoting
	\begin{equation*}
		\vec a=(u,  \mathcal U,   \lambda ),
	\end{equation*} 
	we define $|\vec a|_{X_\rho}, |\vec a|_{Y_\rho}$ and $ |\vec a|_{Z_\rho} $ by 
	\begin{eqnarray*}
	\begin{aligned}
		|\vec a|_{X_\rho}^2&= \norm{u}_{X_\rho}^2+\sum_{m=0}^{+\infty}L_{\rho,m+2}^2\norm{\partial_x^m\lambda }_{L^2}^2+\sum_{m=0}^{+\infty}L_{\rho,m+3}^2\norm{\partial_x^m\mathcal U}_{L^2}^2,\\
		|\vec a|_{Y_\rho}^2&=\sum_{0\leq j\leq 3} \sum_{m=0}^{+\infty}\frac{m+j+1}{\rho} L_{\rho,m+j}^2 \norm{\tau^{\ell+j}\partial_x^m\partial_y^ju}_{L^2}^2+\sum_{m=0}^{+\infty} \frac{m+3}{\rho}L_{\rho,m+2}^2\norm{\partial_x^m\lambda }_{L^2}^2\\
		&\quad +\sum_{m=0}^{+\infty} \frac{(m+4)^3}{\rho^3}L_{\rho,m+3}^2\norm{\partial_x^m\mathcal U}_{L^2}^2,\\
		|\vec a|_{Z_\rho}^2&= \norm{\partial_y u}_{X_\rho}^2+\sum_{m=0}^{+\infty}L_{\rho,m+2}^2\norm{\partial_y\partial_x^m\lambda }_{L^2}^2+\sum_{m=0}^{+\infty}L_{\rho,m+3}^2\norm{\partial_y\partial_x^m\mathcal U}_{L^2}^2,
		\end{aligned}
	\end{eqnarray*}
	where $\norm{\cdot}_{X_\rho}$ and $L_{\rho,k}$ are given as in Definition \ref{defgev}. 
\end{definition}

\begin{remark}
	The norms defined above satisfy  that 
	\begin{eqnarray*}
		\norm{u}_{X_\rho}\leq |\vec a|_{X_\rho}\ \textrm{ and }\ \norm{\partial_yu}_{X_\rho}\leq |\vec a|_{Z_\rho}.
	\end{eqnarray*}
	If $\rho\leq 1$, then
		\begin{eqnarray*}
		  |\vec a|_{X_\rho}\leq |\vec a|_{Y_\rho}.
	\end{eqnarray*}
\end{remark}

In view of the above remark,  the {\it a priori} estimate given in Theorem \ref{th:ape} holds from the
following theorem:

\begin{theorem}\label{thmapri}
Under the same assumption as  in Theorem \ref{th:ape} we have that
   \begin{eqnarray*}
		\sup_{t\geq 0} e^{t/4}|\vec a(t)|_{X_{\rho(t)}}+\Big(\int_0^{+\infty}e^{t/2}|\vec a(t)|_{Z_{\rho(t)}}^2dt\Big)^{1/2}\leq \frac{4 (\rho_0+1) }{\rho_0} \eps_0.
	\end{eqnarray*}   
\end{theorem}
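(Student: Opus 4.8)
The plan is to run a single weighted energy estimate on the triple $\vec a=(u,\mathcal U,\lambda)$ at once, treating $(\mathcal U,\partial_x\lambda)$ exactly as the pair $(h,g)$ in the toy model \eqref{toymodel}, and to close everything in the norm $|\vec a|_{X_\rho}$ by balancing the linear damping, the vertical dissipation, and the gain produced by the shrinking radius $\rho(t)$ from \eqref{derho}. Concretely, I would apply $\tau^{\ell+j}\partial_x^m\partial_y^j$ to \eqref{2dprandtl}, the $L_{\rho,m+3}$-weighted $\partial_x^m$ to \eqref{ymau}, and the $L_{\rho,m+2}$-weighted $\partial_x^m$ to the evolution equations for $\partial_x^m\lambda$ coming from \eqref{lateq}, then pair each in $L^2$ with the corresponding weighted quantity and sum over all indices to get an identity for $\tfrac12\frac{d}{dt}|\vec a|_{X_\rho}^2$. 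Three families of good terms should emerge: the damping $+\,|\vec a|_{X_\rho}^2$ from the zeroth-order terms $u$, $\mathcal U$, $\partial_x\lambda$; the dissipation $+\,|\vec a|_{Z_\rho}^2$ from $-\partial_y^2$; and the negative radius terms $\rho'\tfrac{m+j+1}{\rho}(\cdots)$ coming from $\frac{d}{dt}L_{\rho,k}^2$. The weights in Definition \ref{defabnorm} are tuned precisely so that the radius terms together with the dissipation reconstruct $|\vec a|_{Y_\rho}^2$.

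The central step, and the main obstacle, is the top-order loss of one tangential derivative. Here I would use the cancellation built into \eqref{mau}--\eqref{ma}: the nonlocal source $-\partial_x^3v$ that would otherwise destroy the estimate for $\partial_x^3u$ is absorbed by $\mathcal U$ through \eqref{ymau}, so that what remains on the right of \eqref{ymau} and \eqref{lateq} is either a coupling term $\partial_x\lambda$, $(\partial_x\partial_yu)\int_0^y\mathcal U\,d\tilde y$, $(\partial_xu)\mathcal U$, or the product $-4(\partial_xu)\partial_x^4u$, each carrying a genuine factor of $\partial_xu$ or $\mathcal U$. Following the toy-model identity of Subsection \ref{subseaux}, I would then exploit that, by \eqref{ymau}, $\partial_x\lambda$ is up to lower-order coupling the image of $\mathcal U$ under the damped Prandtl operator (the analogue of $g=\partial_th$) in order to trade the linearly weighted term $\tfrac{m+3}{\rho}L_{\rho,m+2}^2\norm{\partial_x^m\lambda}_{L^2}^2$ for the cubically weighted $\tfrac{(m+4)^3}{\rho^3}L_{\rho,m+3}^2\norm{\partial_x^m\mathcal U}_{L^2}^2$. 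This is exactly the mechanism that upgrades the control on $\mathcal U$ to the cubic weight in $|\vec a|_{Y_\rho}^2$, which is what is needed to dominate the top-order contributions that the transport terms $u\partial_x(\cdot)$ and $v\partial_y(\cdot)$ produce in the $\mathcal U$-estimate after one integration by parts in $y$.

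Next I would estimate the nonlinear and nonlocal remainders. Using the Gevrey product rule built into the radii $L_{\rho,k}$ of \eqref{radiu}, together with the vertical weights $\tau^{\ell+j}$ and Hardy-type inequalities to control the antiderivative $\int_0^y\mathcal U\,d\tilde y$ and the nonlocal velocity $v=-\int_0^y\partial_xu\,d\tilde y$, each remainder should be dominated by $C\max\{\norm{u}_{L^\infty},\norm{\partial_yu}_{L^\infty},\ldots\}\,|\vec a|_{Y_\rho}^2$, with the $L^\infty$ factors themselves bounded by $|\vec a|_{X_\rho}$ through the Gevrey embedding. The delicate point, exactly as in Subsection \ref{subseaux}, is that these cubic terms cannot be absorbed by the damping alone: they must be absorbed by the radius gain $|\rho'|\,|\vec a|_{Y_\rho}^2$ and the dissipation $|\vec a|_{Z_\rho}^2$, which requires $\norm{u}_{L^\infty}+\cdots\lesssim\max\{|\rho'|,|\rho'|^3\}$. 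Since $\rho'=-\tfrac{\rho_0}{24}e^{-t/12}$ gives $|\rho'|^3\sim e^{-t/4}$, and the bootstrap assumption provides $|\vec a|_{X_\rho}\lesssim\eps_0e^{-t/4}$ at the very same rate, this condition holds for all $t\ge0$ once $\eps_0$ is small relative to $\rho_0$.

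After these absorptions the estimate reduces to
\begin{eqnarray*}
	\frac{d}{dt}|\vec a|_{X_\rho}^2+\tfrac12|\vec a|_{X_\rho}^2+|\vec a|_{Z_\rho}^2\leq0,
\end{eqnarray*}
where the coercive term $\tfrac12|\vec a|_{X_\rho}^2$ is what survives of the linear damping. Multiplying by $e^{t/2}$ and integrating from $0$ yields at once the pointwise decay $\sup_{t\ge0}e^{t/4}|\vec a|_{X_{\rho(t)}}$ and the dissipation bound $\int_0^\infty e^{t/2}|\vec a|_{Z_{\rho(t)}}^2\,dt$, controlled by the initial value $|\vec a(0)|_{X_{\rho_0}}$. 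Since $\rho(0)=\rho_0$, $\mathcal U(0)=0$ and $\lambda(0)=\partial_x^3u_0$ by \eqref{mau} and \eqref{ma}, the three extra tangential derivatives in $\lambda(0)$ are accommodated by the larger radius of the datum, so $|\vec a(0)|_{X_{\rho_0}}\lesssim\norm{u_0}_{X_{2\rho_0}}\le\eps_0$. A standard continuity argument then upgrades the bootstrap assumption to the stated bound with the explicit constant $\frac{4(\rho_0+1)}{\rho_0}$, completing the proof. The hardest point throughout is the combination of the derivative-loss cancellation and the dissipation-trading, where the precise powers $(m+4)^3/\rho^3$ against $(m+3)/\rho$ and the factorial bookkeeping of $L_{\rho,k}$ must match exactly for the trade to be admissible.
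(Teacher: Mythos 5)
Your proposal follows essentially the same route as the paper's proof: the same auxiliary pair $(\mathcal U,\lambda)$ cancelling $\partial_x^3v$, the same key trade of the linearly weighted term $\tfrac{m+3}{\rho}L_{\rho,m+2}^2\norm{\partial_x^m\lambda}_{L^2}^2$ for the cubically weighted $\tfrac{(m+4)^3}{\rho^3}L_{\rho,m+3}^2\norm{\partial_x^m\mathcal U}_{L^2}^2$ via the damped Prandtl operator acting on $\mathcal U$ (the paper's relation \eqref{lamu}), the same absorption of the cubic remainders using the matched decay rates $|\rho'|^3\sim e^{-t/4}\sim|\vec a|_{X_\rho}$, and the same bootstrap argument with the initial three extra derivatives in $\lambda(0)=\partial_x^3u_0$ absorbed by the radius doubling $2\rho_0\to\rho_0$. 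The only simplification is that your final differential inequality omits the total-derivative bookkeeping terms $\tfrac14\tfrac{d}{dt}\sum_m\rho'^2\tfrac{(m+4)^2}{\rho^2}L_{\rho,m+3}^2\norm{\partial_x^m\mathcal U}_{L^2}^2$ that the trade necessarily produces (from the cross term of $P$ with $\partial_t$); since these vanish at $t=0$ because $\mathcal U|_{t=0}=0$ and are nonnegative thereafter, the integrated estimate is unchanged, so this is cosmetic rather than a gap.
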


The proof of this theorem is given in  Subsections  \ref{subseaux}-\ref{subsec:com}.   
 To simplify the notations, we will use  $C$ to denote a generic constant  which may vary from line to line and  depend only on  the Sobolev embedding constants and the constants  $ \ell $ and  $\rho_0$ in Definition \ref{defgev} and \eqref{derho}.  
Observe that $X_{r_1} \subset X_{r_2} $ for $r_1\geq r_2$.  Then
 we can assume, without loss of generality,  that the initial radius $\rho_0\leq 1$.  We now
  list some   
 facts that follow  directly  from the definition \eqref{derho} of  $\rho$.  For  $t\geq 0,$
  \begin{equation}
 	\label{eqvi}
   \rho_0/2\leq \rho(t)\leq \rho_0\leq 1, \quad \rho'(t)\leq\rho'^3<0, \quad \rho''(t)-\frac{\rho'(t)^2}{\rho(t)}= \frac{\rho_0   e^{-t/12}}{288(1+e^{-t/12})} \geq 0.
 \end{equation} 
 Recall  that $L_{\rho,m}$ is defined in \eqref{radiu}. Then
 \begin{equation}\label{dev:rho}
\forall\ m\geq 0,\quad  	\frac{d}{dt}L_{\rho,m}=\rho'\frac{m+1}{\rho}L_{\rho,m}.
 \end{equation}   
We will use     the following        Young's inequality for   discrete convolution:
 \begin{eqnarray}\label{dis}
\bigg[ \sum_{m=0}^{\infty} \Big(\sum_{j=0}^m p_jq_{m-j}\Big)^2\bigg]^{1/2}\leq \Big(\sum_{m=0}^{\infty}   q_m^2 \Big)^{1/2} \sum_{j=0}^{\infty}   p_j.
\end{eqnarray}
Here    $ \set{p_{j}}_{j\geq0} $ and $\set {q_{j}}_{j\geq0} $ are positive sequences. As an immediate consequence of \eqref{dis},
\begin{equation}\label{conv}
	  \sum_{m=0}^{\infty} \sum_{j=0}^m p_jq_{m-j} r_m \leq \Big(\sum_{m=0}^{\infty}   q_m^2 \Big)^{1/2}\Big(\sum_{m=0}^{\infty}   r_m^2 \Big)^{1/2}\sum_{j=0}^{\infty}   p_j
\end{equation}
holds  for any positive sequences $ \set{p_{j}}_{j\geq0}, \set {q_{j}}_{j\geq0}$ and $\set {r_{j}}_{j\geq0}. $ 

 \subsection{Estimate on the auxiliary functions}\label{subseaux}
 
 The following proposition  is the main part of the proof for Theorem: \ref{thmapri}.
 
 \begin{proposition}\label{prpaux}
 Under the same assumption as that in Theorem \ref{th:ape},   we have that
 	\begin{eqnarray*}
 		\begin{aligned}
	& \frac12\frac{d}{dt}\sum_{m=0}^{+\infty} \bigg(L_{\rho,m+2}^2\norm{\partial_x^{m} \lambda}_{L^2}^2+L_{\rho,m+3}^2\norm{\partial_x^{m} \mathcal U}_{L^2}^2 \bigg) +\sum_{m=0}^{+\infty}\Big(L_{\rho,m+2}^2\norm{                          \partial_x^{m}\partial_y \lambda}_{L^2}^2+L_{\rho,m+3}^2\norm{                          \partial_x^{m}\partial_y \mathcal U}_{L^2}^2\Big)\\
	&\quad+\frac12\sum_{m=0}^{+\infty}\Big(L_{\rho,m+2}^2\norm{\partial_x^{m} \lambda}_{L^2}^2+L_{\rho,m+3}^2\norm{                          \partial_x^{m}  \mathcal U}_{L^2}^2\Big) \\
	&\quad+\frac14\frac{d}{dt}\sum_{m=0}^{+\infty}     \rho'^2 \frac{(m+4)^2}{\rho^2}L_{\rho,m+3}^2\norm{   \partial_{x}^{m}\mathcal U}_{L^2}^2+ \frac12\sum_{m=0}^{+\infty} \rho'^2 \frac{(m+4)^2}{\rho^2}  L_{\rho,m+3} ^2\norm{ \partial_{x}^{m}\mathcal U }_{L^2}^2\\
	&\leq \frac{1}{4} \rho'^3\sum_{m=0}^{+\infty} \bigg(\frac{m+3}{\rho}L_{\rho,m+2}^2\norm{\partial_x^{m} \lambda}_{L^2}^2+\frac{(m+4)^3}{\rho^3}L_{\rho,m+3}^2\norm{\partial_x^{m} \mathcal U}_{L^2}^2\bigg) +C\inner{ |\vec a|_{X_\rho}+  |\vec a|_{X_\rho}^2}\inner{|\vec a|_{Y_\rho}^2+|\vec a|_{Z_\rho}^2}.
	\end{aligned}
 	\end{eqnarray*}
 \end{proposition}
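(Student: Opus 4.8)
The plan is to treat the coupled relations \eqref{ymau}--\eqref{lateq} as a transport--diffusion system for the pair $(\mathcal U,\partial_x\lambda)$, which, as explained after \eqref{lateq}, plays the role of $(h,g)$ in the toy model \eqref{toymodel} with the Prandtl operator $\partial_t+u\partial_x+v\partial_y-\partial_y^2+1$ in place of $\partial_t$. First I would differentiate \eqref{ymau} and \eqref{lateq} by $\partial_x^m$ and run weighted $L^2$ estimates on $\partial_x^m\mathcal U$ and $\partial_x^m\lambda$ (the latter modulo the harmless index shift by which \eqref{lateq} governs $\partial_x\lambda$ rather than $\lambda$), weighting by $L_{\rho,m+3}^2$ and $L_{\rho,m+2}^2$ and summing over $m\ge0$. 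The routine contributions then assemble the backbone of the left-hand side: the diffusion $-\partial_y^2$ yields, after integration by parts in $y$ with the boundary conditions in \eqref{mau} killing the $y=0$ terms, the dissipation $\sum_m\big(L_{\rho,m+2}^2\norm{\partial_x^m\partial_y\lambda}_{L^2}^2+L_{\rho,m+3}^2\norm{\partial_x^m\partial_y\mathcal U}_{L^2}^2\big)$; the damping $+1$ yields the zeroth-order terms with prefactor $\tfrac12$; and the time derivative of the radii, computed from \eqref{dev:rho}, yields the favorable terms $\rho'\tfrac{m+3}{\rho}L_{\rho,m+2}^2\norm{\partial_x^m\lambda}_{L^2}^2$ and $\rho'\tfrac{m+4}{\rho}L_{\rho,m+3}^2\norm{\partial_x^m\mathcal U}_{L^2}^2$. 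The transport terms are integrated by parts using $\partial_xu+\partial_yv=0$, and the commutators $[\partial_x^m,\,u\partial_x+v\partial_y]$ with the differentiated lower-order terms of \eqref{ymau}--\eqref{lateq} would be bounded through the discrete convolution inequality \eqref{conv} and Sobolev embeddings, producing the factor $C\big(|\vec a|_{X_\rho}+|\vec a|_{X_\rho}^2\big)\big(|\vec a|_{Y_\rho}^2+|\vec a|_{Z_\rho}^2\big)$.

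The essential difficulty is the single term $-4(\partial_xu)\partial_x^4u$ in \eqref{lateq}: since $\mathcal U$ behaves like $\partial_x^3u$, this behaves like $(\partial_xu)\partial_x\mathcal U$, one tangential order above $\mathcal U$, and is the exact analogue of the derivative-losing term $h\partial_xh$ in the toy model. I would handle it by a Young split exploiting that, tested against $\partial_x^m\lambda$ at the weight $L_{\rho,m+2}^2$, its top factor is under-weighted by one radius order, i.e.\ by a factor comparable to $\tfrac{(m+3)^2}{\rho}$; the split then sends it to $\tfrac{m+3}{\rho}L_{\rho,m+2}^2\norm{\partial_x^m\lambda}_{L^2}^2$ and to the cubic weight $\tfrac{(m+4)^3}{\rho^3}L_{\rho,m+3}^2\norm{\partial_x^m\mathcal U}_{L^2}^2$, both of which are pieces of $|\vec a|_{Y_\rho}^2$, so that the term lands in the nonlinear right-hand side. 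This is legitimate only if the cubic $\mathcal U$-weight is genuinely available as a controllable quantity, which the bare energy does not furnish: the radius term on $\mathcal U$ carries merely the linear weight $\tfrac{m+4}{\rho}$.

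Supplying that cubic weight is exactly the purpose of the correction energy $\tfrac14\sum_m\rho'^2\tfrac{(m+4)^2}{\rho^2}L_{\rho,m+3}^2\norm{\partial_x^m\mathcal U}_{L^2}^2$, and its time differentiation is the heart of the argument. The derivative landing on the weight $\rho'^2\tfrac{(m+4)^2}{\rho^2}L_{\rho,m+3}^2$ produces, via $\tfrac{d}{dt}L_{\rho,m+3}^2=2\rho'\tfrac{m+4}{\rho}L_{\rho,m+3}^2$ and $\tfrac{d}{dt}\big(\rho'^2/\rho^2\big)=\tfrac{2\rho'}{\rho^2}\big(\rho''-\rho'^2/\rho\big)$, the decisive cubic term in $\rho'^3\tfrac{(m+4)^3}{\rho^3}L_{\rho,m+3}^2\norm{\partial_x^m\mathcal U}_{L^2}^2$ (negative since $\rho'^3<0$), the remainder carrying the favorable sign $\rho''-\rho'^2/\rho\ge0$ from \eqref{eqvi}. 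The derivative landing on $\norm{\partial_x^m\mathcal U}_{L^2}^2$, after inserting \eqref{ymau} as $\partial_t\mathcal U=\partial_x\lambda-\mathcal U+\partial_y^2\mathcal U-(u\partial_x+v\partial_y)\mathcal U+\textrm{l.o.t.}$ (the analogue of $g=\partial_th$), yields the damping term $\tfrac12\sum_m\rho'^2\tfrac{(m+4)^2}{\rho^2}L_{\rho,m+3}^2\norm{\partial_x^m\mathcal U}_{L^2}^2$ kept on the left (from the $-\mathcal U$ contribution) together with the balanced coupling $\big(\partial_x^m\mathcal U,\ \partial_x^{m+1}\lambda\big)_{L^2}$, whose Young split consumes part of the cubic term and leaves precisely the stated net coefficient $\tfrac14\rho'^3$ on the right. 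The $\lambda$-radius term needs no such upgrade, since $\rho'\le\rho'^3<0$ from \eqref{eqvi} already accounts for the matching $\lambda$-part $\tfrac14\rho'^3\tfrac{m+3}{\rho}L_{\rho,m+2}^2\norm{\partial_x^m\lambda}_{L^2}^2$.

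I expect the main obstacle to be the exact constant accounting in this last step: showing that the cubic term created by the correction, net of what the balanced coupling $\big(\partial_x^m\mathcal U,\partial_x^{m+1}\lambda\big)_{L^2}$ consumes and net of the damping it must still deliver, survives with the stated coefficient $\tfrac14\rho'^3$ and the correct sign rather than as an uncontrolled residue, and that every leftover term — in particular the piece issued by $\tfrac{d}{dt}(\rho'^2/\rho^2)$, where $\rho''-\rho'^2/\rho$ appears — is either favorably signed or dominated by $C\big(|\vec a|_{X_\rho}+|\vec a|_{X_\rho}^2\big)\big(|\vec a|_{Y_\rho}^2+|\vec a|_{Z_\rho}^2\big)$. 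The delicacy is that the one lost tangential derivative must be balanced simultaneously against the linear weight $\tfrac{m+3}{\rho}$ on $\lambda$ and the cubic weight $\tfrac{(m+4)^3}{\rho^3}$ on $\mathcal U$, so the Young parameters throughout the convolution estimates are not free but pinned by the requirement that the outputs coincide with the two weights defining $|\vec a|_{Y_\rho}$.
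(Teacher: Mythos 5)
Your proposal is correct and takes essentially the same route as the paper: the same correction energy $\frac14\sum_m\rho'^2\frac{(m+4)^2}{\rho^2}L_{\rho,m+3}^2\norm{\partial_x^m\mathcal U}_{L^2}^2$, the same sign facts $\rho'\le\rho'^3<0$ and $\rho''-\rho'^2/\rho\ge 0$ from \eqref{eqvi}, the same use of the damping to absorb the linear coupling, and the same convolution/Young splits pinned to the two weights of $|\vec a|_{Y_\rho}$ for the derivative-losing nonlinear terms. The only difference is bookkeeping: the paper manufactures the cubic term by substituting $\partial_x^{m+1}\lambda=P\partial_x^m\mathcal U+K_m$ into the $\lambda$-radius term and expanding the weighted square $\norm{P\partial_x^m\mathcal U}_{L^2}^2$ (steps \eqref{tla}--\eqref{kee}), whereas you differentiate the correction energy directly, insert \eqref{ymau}, and Young-split the resulting coupling $\inner{\partial_x^{m+1}\lambda,\,\partial_x^m\mathcal U}_{L^2}$ against the $\lambda$-radius term and half of the cubic term --- algebraically the same computation run in the opposite direction.
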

 
First of all, the auxiliary function $\lambda$ satisfies the following equation:
\begin{multline*} 
\big(\partial_t+u\partial_x +v\partial_y-\partial_y^2\big)  \lambda +\lambda \\
= -4(\partial_xu)\partial_x^3 u-3(\partial_x^2v)\partial_x\partial_y u-3(\partial_x^2u)^2-3(\partial_xv)\partial_x^2\partial_y u +2(\partial_y ^2u) \mathcal U+(\partial_y u)\int_0^y\mathcal U  d\tilde y.
\end{multline*}
That is, 
\begin{equation}
	\label{llamd+}
	\big(\partial_t+u\partial_x+v\partial_y-\partial_y^2\big)  \lambda +\lambda=H,
\end{equation}
with
\begin{equation}\label{llamd++}
	\begin{aligned}
H&= -4(\partial_xu)\lambda  +\Big (\partial_y u -4(\partial_xu) \partial_yu \Big)\int_0^y\mathcal U  d\tilde y +3(\partial_x\partial_y u) \int_0^y\lambda(t,x,\tilde y) d\tilde y \\
&\quad +3(\partial_x\partial_y u) \int_0^y\bigg((\partial_yu)(t,x,\tilde y)\int_0^{\tilde y}\mathcal U(t,x,r)dr\bigg) d\tilde y -3(\partial_x^2u)^2-3(\partial_xv)\partial_x^2\partial_y u +2(\partial_y ^2u) \mathcal U.
	\end{aligned}
\end{equation}
By the definition \eqref{ma} of $\lambda$ and the fact that  $\lambda|_{y=0}=\lambda|_{y\rightarrow+\infty}=0,$ and the assumptions \eqref{apasu} and \eqref{reg:mu}, we have, 
for all $m\geq 0,$  that
$$\partial_x^m \lambda\in L^2\inner{[0,+\infty[;
\  L^2\Big (\mathbb R_x ; H_0^{1}(\mathbb R_+)\Big)}  \textrm{ and } \partial_x^m\partial_y^2\lambda \in L^2\inner{[0,+\infty[; L^2\Big (\mathbb R_x ; H^{-1}(\mathbb R_+)\Big)},$$
and 
\begin{eqnarray*}
 \partial_x^m H,\  \ \partial_x^m (u\partial_x \lambda +v\partial_y\lambda) \in L^2\inner{[0,+\infty[; L^2} .
\end{eqnarray*}
This implies that
\begin{eqnarray*}
	\partial_t\partial_x^m \lambda \in L^2\inner{[0,+\infty[;  L^2\Big (\mathbb R_x; H^{-1}(\mathbb R_+)\Big)}. 
\end{eqnarray*}
Thus, 
\begin{eqnarray*}
	t\mapsto \norm{\partial_x^m \lambda(t)}_{L^2}^2
\end{eqnarray*}
is absolutely continuous on $[0,+\infty[$, and,  moreover,
\begin{eqnarray*}
	\frac{1}{2}\frac{d}{dt}  \norm{\partial_x^{m} \lambda}_{L^2}^2=\comi{\partial_t\partial_x^{m} \lambda, \ \partial_x^{m} \lambda},
\end{eqnarray*}
where   $\comi{\cdot,\ \cdot}$ stands for the   pairing   between $L^2\Big (\mathbb R_x; H^{-1}(\mathbb R_+)\Big)$ and $L^2\Big (\mathbb R_x; H_0^{1}(\mathbb R_+)\Big)$.
Hence, by \eqref{dev:rho}, 
\begin{eqnarray*}
	\frac{1}{2}\frac{d}{dt}  L_{\rho,m+2}^2 \norm{\partial_x^{m} \lambda}_{L^2}^2=\rho'\frac{m+3}{\rho}L_{\rho,m+2}^2\norm{\partial_x^{m} \lambda}_{L^2}^2+L_{\rho,m+2}^2 \comi{\partial_t\partial_x^{m} \lambda, \ \partial_x^{m} \lambda}.
\end{eqnarray*}
Consequently, we   apply  $\partial_x^m$ to  \eqref{llamd+} and then take the pairing with $\partial_x^m\lambda$ between $L^2\Big (\mathbb R_x; H^{-1}(\mathbb R_+)\Big)$ and $L^2\Big (\mathbb R_x; H_0^{1}(\mathbb R_+)\Big)$,   together with
the factor $L_{\rho,m+2}^2$,  to get that
 \begin{equation}\label{lambeq}
\begin{aligned}
	&\frac{1}{2}\frac{d}{dt}\sum_{m=0}^{+\infty}L_{\rho,m+2}^2\norm{\partial_x^{m} \lambda}_{L^2}^2+\sum_{m=0}^{+\infty}L_{\rho,m+2}^2\norm{                          \partial_x^{m}\partial_y \lambda}_{L^2}^2+\sum_{m=0}^{+\infty}L_{\rho,m+2}^2\norm{\partial_x^{m} \lambda}_{L^2}^2\\
	&= \rho'\sum_{m=0}^{+\infty} \frac{m+3}{\rho}L_{\rho,m+2}^2\norm{\partial_x^{m} \lambda}_{L^2}^2+\sum_{m=0}^{+\infty}L_{\rho,m+2}^2\inner{\partial_x^m H,\,  \partial_x^m\lambda}_{L^2},
	\end{aligned}
\end{equation}
where  $H$ is given in \eqref{llamd++}. 
 
 The next lemmas are for the estimate on the last term  on the right side of \eqref{lambeq}.

 \begin{lemma}\label{lem2} Let $|\vec a|_{X_\rho}, |\vec a|_{Y_\rho}$ and $|\vec a|_{Z_\rho}$ be given as in Definition \ref{defabnorm}. 
 Under the same assumption as in Theorem \ref{th:ape}, we have 	 that
 \begin{equation}\label{mete}
 		\sum_{m=0}^{+\infty}L_{\rho,m+2}^2\Big(\partial_x^m \Big[3(\partial_x\partial_y u) \int_0^y\bigg((\partial_yu)(t,x,\tilde y)\int_0^{\tilde y}\mathcal U(t,x,r)dr\bigg) d\tilde y \Big],\,  \partial_x^m\lambda\Big)_{L^2} \leq C |\vec a|_{X_\rho} ^2|\vec a|_{Y_\rho}^2.
 	\end{equation}
 	Similarly, it holds that
 	\begin{multline*}
 		\sum_{m=0}^{+\infty}L_{\rho,m+2}^2\Big(\partial_x^m \Big[2(\partial_y ^2u) \mathcal U+\Big (\partial_y u -4(\partial_xu) \partial_yu \Big)\int_0^y\mathcal U  d\tilde y \Big],\,  \partial_x^m\lambda\Big)_{L^2}\\
 		\leq C|\vec a|_{X_\rho}|\vec a|_{Z_\rho}^2+  C\big( |\vec a|_{X_\rho}+  |\vec a|_{X_\rho}^2\big)|\vec a|_{Y_\rho}^2.
 			\end{multline*}

 \end{lemma}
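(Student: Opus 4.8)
The plan is to read both inequalities as weighted Gevrey product estimates and to close them with the discrete convolution inequality \eqref{conv}. First I would expand $\partial_x^m$ by the Leibniz rule, commuting $\partial_x$ freely past each $\int_0^y$, so that the left-hand side of \eqref{mete} becomes a sum over $m_1+m_2+m_3=m$ of
\[
L_{\rho,m+2}^2\,\frac{m!}{m_1!\,m_2!\,m_3!}\,\Big(\inner{\partial_x^{m_1+1}\partial_y u}\int_0^y\Big(\inner{\partial_x^{m_2}\partial_y u}\int_0^{\tilde y}\partial_x^{m_3}\mathcal U\,dr\Big)d\tilde y,\ \partial_x^m\lambda\Big)_{L^2}.
\]
In each summand two of the four factors $\partial_x^{m_1+1}\partial_y u$, $\partial_x^{m_2}\partial_y u$, $\partial_x^{m_3}\mathcal U$, $\partial_x^m\lambda$ carry the bulk of the tangential derivatives; I would keep $\partial_x^m\lambda$ together with the single highest-order of the remaining three in $L^2(\mathbb R_+^2)$ via Cauchy--Schwarz, and place the other two in $L^\infty_{x,y}$, using the one-dimensional embeddings $H^1(\mathbb R)\hookrightarrow L^\infty$ in $x$ and $H^1(\mathbb R_+)\hookrightarrow L^\infty$ in $y$. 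The nested integrals are first reduced to pointwise factors by weighted Hardy/Poincar\'e inequalities in $y$: since $\ell>1/2$ and $\mathcal U|_{y\to+\infty}=0$, one controls $\tau^{-\ell}\int_0^y\mathcal U$ and the double integral by $\tau^{\ell}$-weighted $L^2_y$ norms of $\mathcal U$ (or $\partial_y\mathcal U$), which are exactly the weights appearing in the norms of Definition \ref{defabnorm}.

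Next I would rewrite the resulting triple sum over $(m_1,m_2,m_3)$ as iterated discrete convolutions. The two $L^\infty$ factors, after Sobolev embedding, produce nonnegative sequences whose weighted sums are bounded by $|\vec a|_{X_\rho}$; the key point, on which \eqref{conv}/\eqref{dis} hinges, is that these sequences are absolutely summable uniformly in $\rho\le 1$, which follows from the Gevrey-$2$ factorials $1/(k!)^2$ in \eqref{radiu} together with the polynomial weights $(k+1)^{10}$. The two retained $L^2$ factors are measured in the stronger norm $|\vec a|_{Y_\rho}$: the gain factors $\sqrt{(m+\cdot)/\rho}$ (resp.\ $\sqrt{(m+\cdot)^3/\rho^3}$ for $\mathcal U$) built into $|\vec a|_{Y_\rho}$ are precisely what absorbs the borderline loss of one tangential derivative created when the outer weight $L_{\rho,m+2}$ is split among factors with indices $m_i\approx m$. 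Collecting the two $X$-type and two $Y$-type contributions yields the bound $C|\vec a|_{X_\rho}^2|\vec a|_{Y_\rho}^2$ in \eqref{mete}.

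For the second estimate the same scheme applies termwise to $H$ in \eqref{llamd++}. The quartic contribution $C|\vec a|_{X_\rho}^2|\vec a|_{Y_\rho}^2$ comes from the cubic term $-4(\partial_xu)\partial_yu\int_0^y\mathcal U$, handled verbatim as above, and the cubic contribution $C|\vec a|_{X_\rho}|\vec a|_{Y_\rho}^2$ from $(\partial_yu)\int_0^y\mathcal U$. The new feature is the $|\vec a|_{Z_\rho}^2$ bound, which I would extract from $2(\partial_y^2u)\mathcal U$: here $\partial_y^2u=\partial_y(\partial_yu)$ is a top-order vertical derivative and is measured in $|\vec a|_{Z_\rho}$, while the companion factor $\mathcal U$ is placed in $L^\infty_y$ and, through $\mathcal U=-\int_y^{+\infty}\partial_{\tilde y}\mathcal U\,d\tilde y$ together with $\ell>1/2$, is controlled by $\tau^{\ell}\partial_y\mathcal U$, i.e.\ again by $|\vec a|_{Z_\rho}$; with $\partial_x^m\lambda$ kept in $|\vec a|_{X_\rho}$ this produces $C|\vec a|_{X_\rho}|\vec a|_{Z_\rho}^2$.

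The main obstacle I anticipate is the weight bookkeeping rather than any single inequality: one must verify simultaneously that the low-frequency sequences extracted from the $L^\infty$ factors are summable uniformly in $\rho$, and that the residual polynomial-in-$m$ and $\rho^{-1}$ factors left over from splitting $L_{\rho,m+2}$ coincide exactly with the gain weights defining $|\vec a|_{Y_\rho}$ and $|\vec a|_{Z_\rho}$, so that the loss of one tangential derivative is genuinely absorbed and not merely hidden. A secondary, but delicate, obstacle is tracking the powers of $\tau$ through the nested Hardy estimates, since the available weights are capped at $\tau^{\ell+j}$ with $j\le 3$ and no power of $\tau$ may be overspent.
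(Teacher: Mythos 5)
Your overall scheme---Leibniz expansion, splitting at $[m/2]$, $L^\infty$ control of the low-order factors via Sobolev embedding, Hardy-type control of the $y$-integrals, the discrete Young inequality \eqref{conv}, and matching the leftover polynomial-in-$m$ and $\rho^{-1}$ factors against the gain weights of $|\vec a|_{Y_\rho}$---is exactly the paper's strategy. The only organizational difference is that you expand all factors at once in a trinomial sum, whereas the paper first proves two modular block estimates for $\mathscr L(u,\mathcal U)=\int_0^y(\partial_yu)\int_0^{\tilde y}\mathcal U\,dr\,d\tilde y$ (one in $L_x^2L_y^\infty$ with $Y$-type weights, \eqref{prest}, and one in $L^\infty$ with $X$-type weights, \eqref{est:F}) and then performs a single binary Leibniz split against $\partial_x\partial_yu$; this is cosmetic.

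There is, however, a concrete error in your weight bookkeeping, precisely at the point you yourself flag as delicate. In Definition \ref{defabnorm} the $\tau$-weights are attached \emph{only} to the $u$-components: the $\mathcal U$- and $\lambda$-parts of $|\vec a|_{X_\rho}$, $|\vec a|_{Y_\rho}$ and $|\vec a|_{Z_\rho}$ are the \emph{unweighted} quantities $\norm{\partial_x^m\mathcal U}_{L^2}$, $\norm{\partial_y\partial_x^m\mathcal U}_{L^2}$, etc. Hence your claims that the nested integrals are controlled ``by $\tau^\ell$-weighted $L^2_y$ norms of $\mathcal U$ (or $\partial_y\mathcal U$), which are exactly the weights appearing in the norms of Definition \ref{defabnorm}'', and later that $\norm{\mathcal U}_{L_y^\infty}$ is ``controlled by $\tau^{\ell}\partial_y\mathcal U$, i.e.\ again by $|\vec a|_{Z_\rho}$'', invoke norms that do not exist in the definition, so those two steps cannot be closed as written. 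The repair is standard and is what the paper actually does: the $\tau$-weight must be spent on the accompanying $u$-factor, never on $\mathcal U$. Concretely, Cauchy--Schwarz gives $\int_0^{\tilde y}|\mathcal U|\,dr\le \tilde y^{1/2}\norm{\mathcal U}_{L_y^2}$, whence $|\mathscr L(u,\mathcal U)|\le C\norm{\comi y^{\ell+1}\partial_yu}_{L^2_y}\norm{\mathcal U}_{L^2_y}$ for $\ell>1/2$, with $\comi y^{\ell+1}\partial_yu$ an $X_\rho$-component of $u$ and $\mathcal U$ left unweighted; and for the term $2(\partial_y^2u)\mathcal U$ one uses the interpolation $\norm{\partial_x^k\mathcal U}_{L_x^2L_y^\infty}^2\le 2\norm{\partial_x^k\mathcal U}_{L^2}\norm{\partial_y\partial_x^k\mathcal U}_{L^2}$, which yields a factor $|\vec a|_{X_\rho}^{1/2}|\vec a|_{Z_\rho}^{1/2}$ rather than $|\vec a|_{Z_\rho}$, after which the stated bound follows from Young's inequality together with $|\vec a|_{X_\rho}\le|\vec a|_{Y_\rho}$. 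With these substitutions your plan goes through and coincides with the paper's proof.
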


\begin{proof}
It suffices to prove the first estimate because the second one can be obtained similarly.

{\it Step 1.} For simplicity of notation,  set 
\begin{equation}\label{f}
	 \mathscr L (u,\mathcal U):=\int_0^y\bigg((\partial_yu)(t,x,\tilde y)\int_0^{\tilde y}\mathcal U(t,x,r)dr\bigg) d\tilde y. 
\end{equation}
We  first prove the following two estimates for $ \mathscr L (u,\mathcal U)$:
\begin{equation}\label{prest}
	\Big(\sum_{m=0}^{+\infty} \frac{ (m+4) ^{3}}{\rho^{3}} L_{\rho,m+3}^2  \norm{ \partial_x^{m} \mathscr L (u,\mathcal U)}_{L_x^2 L_y^\infty}^2\Big)^{1/2}\leq C|\vec a|_{X_\rho}  |\vec a|_{Y_\rho}
	\end{equation}
	and
	\begin{equation}\label{est:F}
			\Big(\sum_{m=0}^{+\infty}   L_{\rho,m+5}^2  \norm{ \partial_x^{m}\mathscr L (u,\mathcal U)}_{L^\infty}^2\Big)^{1/2}\leq C |\vec a|_{X_\rho}^2.
	\end{equation}
	  In fact, 
Leibniz's formula gives that
\begin{equation}\label{ambm}
	 \frac{ (m+4) ^{3/2}}{\rho^{3/2}} L_{\rho,m+3}  \norm{ \partial_x^{m} \mathscr L (u,\mathcal U)}_{L_x^2 L_y^\infty}\leq  a_m+b_m, 
\end{equation}
with
\begin{eqnarray*}
	\begin{aligned}
		 a_m&=C  \sum_{j=0}^{[m/2]}\frac{m!}{j!(m-j)!} \frac{ (m+4) ^{3/2}}{\rho^{3/2}} \frac{ L_{\rho,m+3} }{L_{\rho,j+3}L_{\rho,m-j+3}  }\\
		&\qquad\qquad\quad\times\big( L_{\rho,j+3} \norm{ \comi y^{\ell+1}\partial_x^{j}\partial_yu}_{L_x^\infty L_y^2} \big)\big(L_{\rho,m-j+3} \norm{   \partial_x^{m-j}\mathcal U}_{L^2}\big)\\
			b_m&= C  \sum_{j=[m/2]+1}^m\frac{m!}{j!(m-j)!} \frac{ (m+4) ^{3/2}}{\rho^{3/2}} \frac{ L_{\rho,m+3} }{L_{\rho,j+1}L_{\rho,m-j+5}  }\\
		&\qquad\qquad\qquad\times\big( L_{\rho,j+1} \norm{ \comi y^{ \ell+1}\partial_x^{j}\partial_yu}_{L^2} \big)\big(L_{\rho,m-j+5} \norm{   \partial_x^{m-j}\mathcal U}_{L_x^\infty L_y^2}\big),
	\end{aligned}
\end{eqnarray*}
 where  $[m/2]$ represents the largest integer less than or equal to $ m/2$.
Direct computation shows that, for any $0\leq j\leq [m/2]$, 
\begin{eqnarray*}
 \frac{m!}{j!(m-j)!} \frac{ (m+4) ^{3/2}}{\rho^{3/2}} \frac{ L_{\rho,m+3} }{L_{\rho,j+3}L_{\rho,m-j+3}  }\leq \frac{C}{ \rho^{4}}\frac{1}{ j+1 }\frac{(m-j+4)^{3/2}}{\rho^{3/2}}\leq \frac{C}{ j+1 }\frac{(m-j+4)^{3/2}}{\rho^{3/2}},
\end{eqnarray*}
where we have used the fact that $\rho_0/2\leq \rho\leq \rho_0$ 
in the last inequality. 
Then
\begin{eqnarray*}
\begin{aligned}
	\sum_{m=0}^{+\infty} a_m^2&\leq C\sum_{m=0}^{+\infty} \bigg[\sum_{j=0}^{[m/2]} 
		 \frac{ L_{\rho,j+3} \norm{ \comi y^{\ell+1}\partial_x^{j}\partial_yu}_{L_x^\infty L_y^2} }{j+1} \frac{(m-j+4)^{3/2}}{\rho^{3/2}}L_{\rho,m-j+3} \norm{   \partial_x^{m-j}\mathcal U}_{L^2}\bigg]^2\\
		 &\leq C\bigg( \sum_{j=0}^{+\infty} 
		 \frac{ L_{\rho,j+3} \norm{ \comi y^{\ell+1}\partial_x^{j}\partial_yu}_{L_x^\infty L_y^2} }{ j+1 }\bigg)^2 \sum_{j=0}^{+\infty} 
		\frac{(j+4)^{3}}{\rho^{3}}L_{\rho,j+3}^2 \norm{   \partial_x^{j}\mathcal U}_{L^2}^2\\
		 &\leq C|\vec a|_{X_\rho}^2 |\vec a|_{Y_\rho}^2,
		 \end{aligned}
\end{eqnarray*}
where we have used   Young's inequality \eqref{dis} for   discrete convolution. Similarly, by using the estimate
\begin{eqnarray*}
	\forall\  [m/2]\leq j\leq m,\quad	\frac{m!}{j!(m-j)!} \frac{ (m+4) ^{3/2}}{\rho^{3/2}} \frac{ L_{\rho,m+3} }{L_{\rho,j+1}L_{\rho,m-j+5}  }\leq  \frac{C}{m-j+1},
\end{eqnarray*}
we obtain, by  observing $|\vec a|_{X_\rho}\leq |\vec a|_{Y_\rho}$, that
\begin{eqnarray*}
	\sum_{m=0}^{+\infty} b_m^2\leq C\bigg( \sum_{j=0}^{+\infty} 
		\frac{L_{\rho,j+5}  \norm{   \partial_x^{j}\mathcal U}_{L_x^\infty L_y^2}}{ j+1} \bigg)^2 \sum_{j=0}^{+\infty} 
		  L_{\rho,j+1}^2 \norm{ \comi y^{ \ell+1}\partial_x^{j}\partial_yu}_{L^2}^2
		   \leq C|\vec a|_{X_\rho}^4 \leq C|\vec a|_{X_\rho}^2|\vec a|_{Y_\rho}^2.
\end{eqnarray*}
Combining the above estimates with \eqref{ambm} yields \eqref{prest}.  

 {\it Step 2}. To prove \eqref{mete}, 
   Leibniz's formula gives that
\begin{equation}\label{i1i2}
	\sum_{m=0}^{+\infty}L_{\rho,m+2}^2\Big(\partial_x^m \big[3(\partial_x\partial_y u)   \mathscr L (u,\mathcal U) \big],\,  \partial_x^m\lambda\Big)_{L^2}\leq I_1+I_2,
\end{equation}
where
\begin{eqnarray*}
	\begin{aligned}
		I_1&= 3 \sum_{m=0}^{+\infty}\sum_{j=0}^{[m/2]} {m\choose j} \frac{ L_{\rho,m+2}}{L_{\rho,j+4} L_{\rho,m-j+3}} \frac{\rho^{2}} {(m-j+4)^{3\over2}(m+1)^{1\over2}}L_{\rho,j+4} \norm{ \partial_x^{j+1}\partial_y u}_{L_x^\infty L_y^2}\\
		&\qquad          \times \Big(\frac{(m-j+4)^{3/2}}{\rho^{3/2}}L_{\rho,m-j+3}\norm{  \partial_x^{m-j} \mathscr L (u,\mathcal U)}_{L_x^2 L_y^\infty} \Big) \Big(\frac{(m+1)^{1/2}} {\rho^{1/2}}L_{\rho,m+2}\norm{ \partial_x^m\lambda}_{L^2}\Big),\\
		I_2&=3  \sum_{m=0}^{+\infty}\sum_{j= [m/2]+1}^m {m\choose j}\frac{L_{\rho,m+2}}{L_{\rho,j+2} L_{\rho,m-j+5}} L_{\rho,j+2} \norm{\partial_x^{j+1}\partial_y u}_{L^2}\\
		&\qquad  \qquad   \qquad   \qquad    \times \big(L_{\rho,m-j+5}\norm{ \partial_x^{m-j} \mathscr L (u,\mathcal U)}_{L^\infty} \big) \big(L_{\rho,m+2}\norm{ \partial_x^m\lambda}_{L^2}\big).
		\end{aligned}
	\end{eqnarray*}
Straightforward calculation shows that
\begin{eqnarray*}
	\begin{aligned}
\forall\ 0\leq j\leq [m/2],\quad 		\frac{m!}{j!(m-j)!} \frac{L_{\rho,m+2}}{L_{\rho,j+4} L_{\rho,m-j+3}} \frac{\rho^{2}} {(m-j+4)^{3/2}(m+1)^{1/2}} \leq  \frac{C}{ j+1}.
	\end{aligned}
\end{eqnarray*}
Thus, 
\begin{eqnarray*}
\begin{aligned}
	I_1& \leq C\sum_{m=0}^{+\infty}\sum_{j=0}^{[m/2]} \frac{\ L_{\rho,j+4} \norm{\partial_x^{j+1}\partial_y u}_{L_x^\infty L_y^2}}{ j+1}\\
		&\qquad \qquad         \times \Big(\frac{(m-j+4)^{3/2}}{\rho^{3/2}}L_{\rho,m-j+3}\norm{ \partial_x^{m-j} \mathscr L (u,\mathcal U)}_{L_x^2 L_y^\infty} \Big) \Big(\frac{(m+1)^{1/2}} {\rho^{1/2}}L_{\rho,m+2}\norm{ \partial_x^m\lambda}_{L^2}\Big)  \\
		& \leq C \sum_{j=0}^{+\infty} \frac{L_{\rho,j+4} \norm{ \partial_x^{j+1}\partial_y u}_{L_x^\infty L_y^2}}{j+1} \Big(\sum_{m=0}^{+\infty} \frac{ (m+4) ^{3}}{\rho^{3}} L_{\rho,m+3}^2  \norm{ \partial_x^{m} \mathscr L (u,\mathcal U)}_{L_x^2 L_y^\infty}^2\Big)^{1\over2} |\vec a|_{Y_\rho}\\
		&\leq C |\vec a|_{X_\rho}^2|\vec a|_{Y_\rho}^2,		 
\end{aligned}
\end{eqnarray*}
where we have used \eqref{conv} in the second inequality and \eqref{prest} in the last inequality.  

 For $I_2$, by   \eqref{est:F} and the estimate  \begin{eqnarray*}
	\begin{aligned}
\forall\   [m/2]+1\leq j\leq m,\quad 		 \frac{m!}{j!(m-j)!} \frac{L_{\rho,m+2}}{L_{\rho,j+2} L_{\rho,m-j+5}}  \leq \frac{C}{m-j+1},	
\end{aligned}
\end{eqnarray*}
we have that
\begin{eqnarray*}
	I_2\leq C \bigg(\sum_{j=0}^{+\infty}  L_{\rho,j+2} ^2\norm{ \partial_x^{j+1}\partial_y u}_{L^2} ^2\bigg)^{1/2} \bigg(\sum_{m=0}^{+\infty} \frac{L_{\rho,m+5}   \norm{ \partial_x^{m} \mathscr L (u,\mathcal U)}_{ L^\infty}}{m+1}  \bigg)  |\vec a|_{X_\rho}\leq C |\vec a|_{X_\rho}^4\leq C|\vec a|_{X_\rho}^2|\vec a|_{Y_\rho}^2.
\end{eqnarray*}
Substituting the above estimates on  $I_1$ and $I_2$ into  \eqref{i1i2} yields the first estimate of \eqref{mete} in Lemma \ref{lem2}.

{\it Step  3.} It remains to prove the second   estimate in Lemma \ref{lem2}. For this, write
\begin{equation}\label{2y}
	\begin{aligned}
		&	\sum_{m=0}^{+\infty}L_{\rho,m+2}^2\Big(\partial_x^m  \big[2(\partial_y ^2u) \mathcal U\big],\   \partial_x^m\lambda\Big)_{L^2}\\
			&\leq 2 \sum_{m=0}^{+\infty}\sum_{j=0}^{[m/2]}\frac{m!}{j!(m-j)!}\frac{L_{\rho,m+2}}{L_{\rho, j+5}L_{\rho,m-j+3}}\frac{\rho^2}{(m-j+4)^{3/2}(m+1)^{1/2}} (L_{\rho, j+5}\norm{\partial_x^j\partial_y ^2u}_{L^\infty})\\
			&\qquad\qquad\qquad\times \Big(\frac{(m-j+4)^{3/2}}{\rho^{3/2}}L_{\rho,m-j+3}\norm{\partial_x^{m-j}\mathcal U}_{L^2}\Big)\Big(\frac{(m+1)^{1/2}}{\rho^{1/2}}L_{\rho,m+2}\norm{  \partial_x^m\lambda}_{L^2}\Big)\\
			&\quad+ 2 \sum_{m=0}^{+\infty}\sum_{j=[m/2]+1}^m\frac{m!}{j!(m-j)!}\frac{L_{\rho,m+2}}{L_{\rho, j+2}L_{\rho,m-j+5}} (L_{\rho, j+2}\norm{\comi{y} ^{1/2}\partial_x^j\partial_y ^2u}_{L^2})\\
			&\qquad\qquad\qquad\times \big (L_{\rho,m-j+5}\norm{\partial_x^{m-j}\mathcal U}_{L^\infty}\big )\big (L_{\rho,m+2}\norm{ \partial_x^m\lambda}_{L^2}\big ).
	\end{aligned}
\end{equation} 
Similarly to the previous step,  we conclude that  the first term on the right hand side of \eqref{2y} is bounded from above by 
$C |\vec a|_{X_\rho} |\vec a|_{Y_\rho}^2$, and the last term is bounded from above by $C |\vec a|_{X_\rho}^2 |\vec a|_{Z_\rho} $.  Thus,
\begin{eqnarray*}
\begin{aligned}
		\sum_{m=0}^{+\infty}L_{\rho,m+2}^2\Big(\partial_x^m  \big[2(\partial_y ^2u) \mathcal U\big],\   \partial_x^m\lambda\Big)_{L^2}  		 \leq C|\vec a|_{X_\rho} |\vec a|_{Y_\rho}^2+ C|\vec a|_{X_\rho}^2|\vec a|_{Z_\rho}  \leq C |\vec a|_{X_\rho} |\vec a|_{Y_\rho}^2+ C|\vec a|_{X_\rho} |\vec a|_{Z_\rho}^2.
		\end{aligned} 
\end{eqnarray*}
Similarly,
\begin{eqnarray*}
		\sum_{m=0}^{+\infty}L_{\rho,m+2}^2\Big(\partial_x^m \Big[ \Big (\partial_y u -4(\partial_xu) \partial_yu \Big)\int_0^y\mathcal U  d\tilde y \Big],\,  \partial_x^m\lambda\Big)_{L^2} 
 		\leq  C \big( |\vec a|_{X_\rho}+  |\vec a|_{X_\rho}^2\big)|\vec a|_{Y_\rho}^2. 	
\end{eqnarray*}
Combining the above two estimates gives the second estimate in Lemma \ref{lem2}. This completes the proof of the lemma. 
\end{proof}

\begin{lemma}\label{lem1} Let $|\vec a|_{X_\rho}, |\vec a|_{Y_\rho}$ and $|\vec a|_{Z_\rho}$ be given as in Definition \ref{defabnorm}. 
 Under the same assumption as in Theorem \ref{th:ape}, we have that
 	\begin{eqnarray*}
 		\sum_{m=0}^{+\infty}L_{\rho,m+2}^2\Big(\partial_x^m \Big[3(\partial_x\partial_y u) \int_0^y\lambda(t,x,\tilde y) d\tilde y \Big],\,  \partial_x^m\lambda\Big)_{L^2}\leq C |\vec a|_{X_\rho} |\vec a|_{Y_\rho}^2,
 	\end{eqnarray*}
 	and
 	\begin{eqnarray*}
 		\sum_{m=0}^{+\infty}L_{\rho,m+2}^2\Big(\partial_x^m \big[-4(\partial_xu)\lambda-3(\partial_x^2u)^2-3(\partial_xv)\partial_x^2\partial_y u\big], \partial_x^m\lambda\Big)_{L^2}
 		\leq  C  |\vec a|_{X_\rho} |\vec a|_{Y_\rho}^2+C |\vec a|_{X_\rho}  |\vec a|_{Z_\rho}^2.
 	\end{eqnarray*}
 \end{lemma}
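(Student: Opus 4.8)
The plan is to prove both inequalities by the mechanism already used for Lemma \ref{lem2}: apply Leibniz's formula to the tangential derivative $\partial_x^m$, split each resulting $j$-sum at $[m/2]$ into a low-frequency and a high-frequency part, compare the weight ratios $L_{\rho,m+2}/(L_{\rho,\cdot}L_{\rho,\cdot})$ using $\rho_0/2\le\rho\le\rho_0$ exactly as in the displayed factor bounds of Lemma \ref{lem2}, and close with the discrete Young inequalities \eqref{dis}--\eqref{conv}. In every term the test factor $\partial_x^m\lambda$ carries the weight $L_{\rho,m+2}$ and, after inserting $(m+1)^{1/2}\rho^{-1/2}$, supplies one factor of $|\vec a|_{Y_\rho}$, while each factor built purely from $u$ is placed in $L^\infty$ or $L_x^\infty L_y^2$ by Sobolev embedding and supplies a factor of $|\vec a|_{X_\rho}$. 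The task then reduces to producing, for the one remaining factor of each term, a preliminary bound of the type \eqref{prest}--\eqref{est:F}.

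For the first inequality the remaining factor is the nonlocal quantity $\int_0^y\lambda\,d\tilde y$. The one genuinely new point is that $\lambda$ carries no polynomial weight in Definition \ref{defabnorm}, so $\int_0^y\partial_x^k\lambda\,d\tilde y$ need not be bounded in $L_y^\infty$; I would therefore keep this integral paired with the decaying coefficient $\partial_x\partial_y u$, which carries the weight $\tau^{\ell+1}$, and exploit that for $\ell>1/2$ (indeed for $\ell>0$) Cauchy--Schwarz in the inner integral gives
\[
\norm{\comi y^{-(\ell+1)}\textstyle\int_0^y\partial_x^k\lambda\,d\tilde y}_{L^2_y}\le\Big(\int_0^{+\infty}\comi{y}^{-2(\ell+1)}y\,dy\Big)^{1/2}\norm{\partial_x^k\lambda}_{L^2_y}\le C\norm{\partial_x^k\lambda}_{L^2_y}.
\]
This plays the role of \eqref{prest}: the factor $\partial_x^{j+1}\partial_y u$ absorbs the weight $\tau^{\ell+1}$, while $\int_0^y\partial_x^{m-j}\lambda\,d\tilde y$ collapses to $\norm{\partial_x^{m-j}\lambda}_{L^2}$, a $|\vec a|_{Y_\rho}$ contribution. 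Running the $I_1,I_2$ split of Lemma \ref{lem2} Step 2 verbatim then yields the stated bound $C|\vec a|_{X_\rho}|\vec a|_{Y_\rho}^2$.

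For the second inequality the terms $-4(\partial_xu)\lambda$ and $-3(\partial_x^2u)^2$ are honest products with no nonlocal factor: after Leibniz one places the lower-order $u$-factor in $L^\infty$ (contributing $|\vec a|_{X_\rho}$), keeps the higher-order $\lambda$- or $\partial_x^2u$-factor in $L^2$ (contributing $|\vec a|_{Y_\rho}$), and applies \eqref{conv} to obtain $C|\vec a|_{X_\rho}|\vec a|_{Y_\rho}^2$. The delicate term is $-3(\partial_xv)\partial_x^2\partial_yu$. Here I would first write $\partial_xv=-\int_0^y\partial_x^2u\,d\tilde y$ and dispose of this nonlocal integral with the weighted estimate above, the required decay now being furnished by $\partial_x^2\partial_yu$. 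The essential observation is that $\partial_x^2\partial_yu$ is a normal derivative of $u$, hence, when it is the top-order factor, it must be measured by the $X_\rho$-norm of $\partial_yu$, i.e. by $|\vec a|_{Z_\rho}$; after the convolution this produces $C|\vec a|_{X_\rho}|\vec a|_{Z_\rho}^2$, following a Young's-inequality step identical to the end of the proof of Lemma \ref{lem2} (which converts an intermediate $|\vec a|_{X_\rho}^2|\vec a|_{Z_\rho}$ into $|\vec a|_{X_\rho}|\vec a|_{Z_\rho}^2$).

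The hard part will be precisely this last term, because it simultaneously carries the nonlocal $v$ — the source of the loss of one tangential derivative — and a normal derivative of $u$. The main care is the derivative bookkeeping: the $m$ tangential derivatives must be routed so that one lands on $\partial_xv$ and the remainder on $\partial_x^2\partial_yu$, and the latter must be recognized as a $Z_\rho$ quantity rather than an $X_\rho$ one, for otherwise the order count does not balance and the estimate cannot be closed inside the triple norm. The weighted integrability used both for $\int_0^y\lambda\,d\tilde y$ and for $\partial_xv$, which relies only on $\ell>1/2$, is the technical device rendering the nonlocal factors harmless; everything else is a mechanical repetition of the convolution estimates of Lemma \ref{lem2}.
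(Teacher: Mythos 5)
Your proposal is correct and follows essentially the same route as the paper's proof: Leibniz expansion split at $[m/2]$, the Cauchy--Schwarz (Hardy-type) bound valid for $\ell>1/2$ to collapse the nonlocal integrals $\int_0^y\partial_x^{m-j}\lambda\,d\tilde y$ and $\partial_x^{j+1}v=-\int_0^y\partial_x^{j+2}u\,d\tilde y$ against the decay weights, the discrete Young inequality \eqref{conv}, and finally the recognition of $\partial_x^{m-j+2}\partial_y u$ as a $Z_\rho$-quantity together with the Young step converting $|\vec a|_{X_\rho}^2|\vec a|_{Z_\rho}$ into $|\vec a|_{X_\rho}|\vec a|_{Y_\rho}^2+|\vec a|_{X_\rho}|\vec a|_{Z_\rho}^2$. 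The only differences are immaterial bookkeeping choices: you place the weighted $\lambda$-integral in $L_y^2$ with the $u$-factor in $L^\infty$, whereas the paper puts $\comi y^{-\ell}\int_0^y\partial_x^{m-j}\lambda\,d\tilde y$ in $L_x^2L_y^\infty$ against $\comi y^{\ell}\partial_x^{j+1}\partial_y u\in L_x^\infty L_y^2$, and the paper mostly bounds terms by $|\vec a|_{X_\rho}^3$ and then invokes $|\vec a|_{X_\rho}\le|\vec a|_{Y_\rho}$ rather than inserting $(m+1)^{1/2}\rho^{-1/2}$ factors term by term.
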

 
 \begin{proof} 
 	By Leibniz's formula, we have that
 	\begin{multline}\label{spl}
 		   \sum_{m=0}^{+\infty}L_{\rho,m+2}^2\Big(\partial_x^m \Big[(\partial_x\partial_y u) \int_0^y\lambda(t,x,\tilde y) d\tilde y \Big],\,  \partial_x^m\lambda\Big)_{L^2}\\
 		    \leq \sum_{m=0}^{+\infty}\sum_{j=0}^{[m/2]}{m\choose j} L_{\rho,m+2}^2\norm{\comi y^{\ell}   \partial_x^{j+1}\partial_y u}_{L_x^\infty L_y^2} \norm{\partial_x^{m-j}\lambda}_{L^2}\norm{\partial_x^m\lambda}_{L^2}\\
 		   +\sum_{m=0}^{+\infty}\sum_{j=[m/2]+1}^m{m\choose j} L_{\rho,m+2}^2\norm{\comi y^{\ell}   \partial_x^{j+1}\partial_y u}_{L^2} \norm{\partial_x^{m-j}\lambda}_{L_x^\infty L_y^2}\norm{\partial_x^m\lambda}_{L^2},
 		\end{multline}
 			where we have used the fact that  
 		\begin{eqnarray*}
 		 \norm{\comi{y}^{-\ell}\int_0^y\partial_{x}^{m-j}\lambda  d\tilde y}_{L_{x}^2 L_y^\infty}\leq C \norm{\partial_{x}^{m-j}\lambda }_{L^2}   		
 		\end{eqnarray*}
 	for $\ell>1/2$	  with $\comi y=(1+y^2)^{1/2}$.  
 		 By
 \begin{eqnarray*}
 	\forall\ 0\leq j\leq [m/2],\quad \frac{m!}{j!(m-j)!}\frac{L_{\rho,m+2}}{L_{\rho,j+4}L_{m-j+2}}\leq  \frac{C} { j+1  },
 \end{eqnarray*}
 we have that
 \begin{equation}\label{splfirs}
 	\begin{aligned}
 		&\sum_{m=0}^{+\infty}\sum_{j=0}^{[m/2]}{m\choose j} L_{\rho,m+2}^2\norm{\comi y^{\ell}   \partial_x^{j+1}\partial_y u}_{L_x^\infty L_y^2} \norm{\partial_x^{m-j}\lambda}_{L^2}\norm{\partial_x^m\lambda}_{L^2}\\
 		&\leq C\sum_{m=0}^{+\infty}\sum_{j=0}^{[m/2]}  \frac{L_{\rho,j+4} \norm{\comi y^{\ell}   \partial_x^{j+1}\partial_y u}_{L_x^\infty L_y^2}}{ j+1 }(L_{\rho,m-j+2} \norm{\partial_x^{m-j}\lambda}_{L^2})(L_{\rho,m+2} \norm{\partial_x^m\lambda}_{L^2})\\
 		&\leq C\sum_{j=0}^{+\infty}  \frac{L_{\rho,j+4} \norm{\comi y^{\ell}   \partial_x^{j+1}\partial_y u}_{L_x^\infty L_y^2}}{ j+1 }  \sum_{m=0}^{+\infty}L_{\rho,m+2}^2 \norm{\partial_x^{m}\lambda}_{L^2}^2 \leq C|\vec a|_{X_\rho}^3,   
 	\end{aligned}
 \end{equation}
 where \eqref{conv} is used in the second inequality  and 
 \begin{eqnarray*}
 	\sum_{j=0}^{+\infty}  \frac{L_{\rho,j+4} \norm{\comi y^{\ell}   \partial_x^{j+1}\partial_y u}_{L_x^\infty L_y^2}}{ j+1 } \leq C\Big(\sum_{j=0}^{+\infty}   L_{\rho,j+4}^2 \norm{\comi y^{\ell}   \partial_x^{j+1}\partial_y u}_{L_x^\infty L_y^2}^2\Big)^{1/2}\leq C |\vec a|_{X_\rho}
 \end{eqnarray*}
is used in the last inequality. 
 Similarly, by noting that
 \begin{eqnarray*}
 	\forall\   [m/2]+1\leq j\leq m,\quad \frac{m!}{j!(m-j)!}\frac{L_{\rho,m+2}}{L_{\rho,j+2}L_{m-j+4}} \leq   \frac{C}{ m-j+1  },
 \end{eqnarray*}
 we have that
 \begin{multline*}
 	 \sum_{m=0}^{+\infty}\sum_{j=[m/2]+1}^m{m\choose j} L_{\rho,m+2}^2\norm{\comi y^{\ell}   \partial_x^{j+1}\partial_y u}_{L^2} \norm{\partial_x^{m-j}\lambda}_{L_x^\infty L_y^2}\norm{\partial_x^m\lambda}_{L^2}\\
 	 \leq  C\bigg(\sum_{j=0}^{+\infty}   L_{\rho,j+2}^2 \norm{\comi y^{\ell}   \partial_x^{j+1}\partial_y u}_{L^2}^2\bigg)^{1\over2} \bigg(\sum_{m=0}^{+\infty}L_{\rho,m+2}^2 \norm{\partial_x^{m}\lambda}_{L^2}^2\bigg)^{1\over2} \sum_{m=0}^{+\infty}\frac{L_{\rho,m+4}  \norm{\partial_x^{m}\lambda}_{L_x^\infty L_y^2}}{m+1}  \leq C |\vec a|_{X_\rho}^3.
 	\end{multline*}
 By substituting the above inequality and \eqref{splfirs} into \eqref{spl} and observing that $|\vec a|_{X_\rho}\leq |\vec a|_{Y_\rho}$, we obtain the   first estimate in Lemma \ref{lem1}.  

A similar argument  as that  above gives that
	\begin{eqnarray*}
 		\sum_{m=0}^{+\infty}L_{\rho,m+2}^2\Big(\partial_x^m \big[-4(\partial_xu)\lambda-3(\partial_x^2u)^2\big],\,  \partial_x^m\lambda\Big)_{L^2}\leq C |\vec a|_{X_\rho}^3,
 	\end{eqnarray*}
 	and 
   \begin{eqnarray*}
 	\begin{aligned}
 &\sum_{m=0}^{+\infty}L_{\rho,m+2}^2\Big(\partial_x^m \big[-3(\partial_xv)\partial_x^2\partial_y u \big],\,  \partial_x^m\lambda\Big)_{L^2} \\
& \leq  C\sum_{m=0}^{+\infty}\sum_{j=0}^{[m/2]}{m\choose j} L_{\rho,m+2}^2\norm{\comi y ^\ell  \partial_x^{j+2}u}_{L_x^\infty L_y^2} \norm{   \partial_x^{m-j+2}\partial_y u}_{L^2}\norm{\partial_x^m\lambda}_{L^2}\\&
\quad +C\sum_{m=0}^{+\infty}\sum_{j=[m/2]+1}^m{m\choose j} L_{\rho,m+2}^2\norm{\comi y^\ell  \partial_x^{j+2}u}_{L^2}\norm{  \partial_x^{m-j+2}\partial_y u}_{L_x^\infty L_y^2}\norm{\partial_x^m\lambda}_{L^2}\\
 &\leq  C |\vec a|_{X_\rho}^2  |\vec a|_{Z_\rho}
 \leq C |\vec a|_{X_\rho}^3+C |\vec a|_{X_\rho}  |\vec a|_{Z_\rho}^2,
 \end{aligned}
 \end{eqnarray*}
where  $v=-\int_0^y \partial_xu(t,x,\tilde y)d\tilde  y$  is used.
Combining the above estimates and observing  that $|\vec a|_{X_\rho}\leq |\vec a|_{Y_\rho}$ gives  the second estimate in Lemma \ref{lem1}. Hence, the proof of the lemma is complete.
 \end{proof}
 
By the definition of $H$ given in  \eqref{llamd++}, 
  the estimates in Lemmas  \ref{lem2} and \ref{lem1} yield that
  \begin{eqnarray*}
  	\sum_{m=0}^{+\infty}L_{\rho,m+2}^2\inner{\partial_x^m H,\,  \partial_x^m\lambda}_{L^2}\leq C|\vec a|_{X_\rho}|\vec a|_{Z_\rho}^2+  C\big( |\vec a|_{X_\rho}+  |\vec a|_{X_\rho}^2\big)|\vec a|_{Y_\rho}^2.
  \end{eqnarray*}

  Therefore, we have, by the equality 
   \eqref{lambeq}, the following corollary:
 
\begin{corollary}[Estimate for $\lambda$]\label{corlam}
	 Under the same assumption as in Theorem \ref{th:ape}, we have  that
	 \begin{multline*}
	 \frac{1}{2}\frac{d}{dt}\sum_{m=0}^{+\infty}L_{\rho,m+2}^2\norm{\partial_x^{m} \lambda}_{L^2}^2+\sum_{m=0}^{+\infty}L_{\rho,m+2}^2\norm{                          \partial_x^{m}\partial_y \lambda}_{L^2}^2+\sum_{m=0}^{+\infty}L_{\rho,m+2}^2\norm{\partial_x^{m} \lambda}_{L^2}^2\\
	 \leq \rho'\sum_{m=0}^{+\infty} \frac{m+3}{\rho}L_{\rho,m+2}^2\norm{\partial_x^{m} \lambda}_{L^2}^2 +C|\vec a|_{X_\rho}|\vec a|_{Z_\rho}^2+ C\big( |\vec a|_{X_\rho}+  |\vec a|_{X_\rho}^2\big)|\vec a|_{Y_\rho}^2.
	\end{multline*}
\end{corollary}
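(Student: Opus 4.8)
The plan is to obtain the corollary by feeding the nonlinear estimates of Lemmas \ref{lem2} and \ref{lem1} into the weighted energy identity \eqref{lambeq}. Recall that \eqref{lambeq} is produced by applying $\partial_x^m$ to the transport--diffusion equation \eqref{llamd+} for $\lambda$, pairing with $\partial_x^m\lambda$ in the $L^2(\mathbb R_x;H^{-1}(\mathbb R_+))$--$L^2(\mathbb R_x;H_0^1(\mathbb R_+))$ duality, multiplying by $L_{\rho,m+2}^2$, and summing over $m\ge0$. In that computation the term $-\partial_y^2\lambda$ produces the dissipation $\sum_m L_{\rho,m+2}^2\norm{\partial_x^m\partial_y\lambda}_{L^2}^2$ after integration by parts in $y$ (the boundary contributions drop since $\lambda|_{y=0}=\lambda|_{y\to+\infty}=0$), the zeroth-order term $\lambda$ produces the damping $\sum_m L_{\rho,m+2}^2\norm{\partial_x^m\lambda}_{L^2}^2$, the top-order convection $u\partial_x^{m+1}\lambda+v\partial_y\partial_x^m\lambda$ integrates to zero against $\partial_x^m\lambda$ by the divergence-free relation $\partial_xu+\partial_yv=0$, and differentiating the weight in time produces the factor $\rho'\frac{m+3}{\rho}$ via \eqref{dev:rho}.

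Thus only the source term $\sum_{m=0}^{+\infty}L_{\rho,m+2}^2(\partial_x^m H,\,\partial_x^m\lambda)_{L^2}$ remains to be bounded, where $H$ is given by \eqref{llamd++}. I would split $H$ into its seven summands and assign each to one lemma: the three pieces carrying the nonlocal auxiliary function, namely $3(\partial_x\partial_yu)\mathscr L(u,\mathcal U)$, $2(\partial_y^2u)\mathcal U$ and $(\partial_yu-4(\partial_xu)\partial_yu)\int_0^y\mathcal U\,d\tilde y$, are controlled by Lemma \ref{lem2}, while the remaining four, $3(\partial_x\partial_yu)\int_0^y\lambda\,d\tilde y$, $-4(\partial_xu)\lambda$, $-3(\partial_x^2u)^2$ and $-3(\partial_xv)\partial_x^2\partial_yu$, are controlled by Lemma \ref{lem1}. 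Adding the four right-hand sides and using $|\vec a|_{X_\rho}\le|\vec a|_{Y_\rho}$ together with $|\vec a|_{X_\rho}^2\le|\vec a|_{X_\rho}+|\vec a|_{X_\rho}^2$ to absorb the stray quartic term $C|\vec a|_{X_\rho}^2|\vec a|_{Y_\rho}^2$, I obtain
\begin{equation*}
\sum_{m=0}^{+\infty}L_{\rho,m+2}^2(\partial_x^m H,\,\partial_x^m\lambda)_{L^2}\le C|\vec a|_{X_\rho}|\vec a|_{Z_\rho}^2+C\big(|\vec a|_{X_\rho}+|\vec a|_{X_\rho}^2\big)|\vec a|_{Y_\rho}^2,
\end{equation*}
and substituting this into \eqref{lambeq}, whose $\rho'$-term carries over verbatim, yields exactly the claimed inequality.

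Since the heavy lifting is done inside the two lemmas, the corollary is essentially an assembly step, and the real obstacle sits upstream. The most delicate estimate is that of the doubly nonlocal term $\mathscr L(u,\mathcal U)=\int_0^y(\partial_yu)\big(\int_0^{\tilde y}\mathcal U\,dr\big)d\tilde y$: one must apportion the Gevrey weights $L_{\rho,\cdot}$ and the compensating powers of $(m+\cdot)/\rho$ so that the ensuing discrete convolution closes under Young's inequality \eqref{dis}, which dictates splitting each Leibniz sum at $j=[m/2]$ and placing the $L^\infty$-norm on the low-frequency factor, and which relies on the weighted bound $\norm{\comi{y}^{-\ell}\int_0^y\partial_x^{m-j}\lambda\,d\tilde y}_{L_x^2L_y^\infty}\le C\norm{\partial_x^{m-j}\lambda}_{L^2}$ valid for $\ell>1/2$. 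A subsidiary point, needed to make the time-derivative identity \eqref{lambeq} rigorous, is the functional-analytic justification that $t\mapsto\norm{\partial_x^m\lambda(t)}_{L^2}^2$ is absolutely continuous, which rests on the regularity $\partial_t\partial_x^m\lambda\in L^2\big([0,+\infty[;L^2(\mathbb R_x;H^{-1}(\mathbb R_+))\big)$ guaranteed by \eqref{apasu} and \eqref{reg:mu}.
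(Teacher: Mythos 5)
Your proposal is correct and is essentially the paper's own proof: the corollary is obtained exactly as you describe, by bounding $\sum_{m=0}^{+\infty}L_{\rho,m+2}^2\inner{\partial_x^m H,\, \partial_x^m\lambda}_{L^2}$ with Lemmas \ref{lem2} and \ref{lem1} applied to the same grouping of the seven summands of $H$ (the three $\mathcal U$-carrying terms to Lemma \ref{lem2}, the remaining four to Lemma \ref{lem1}), absorbing the quartic term $C|\vec a|_{X_\rho}^2|\vec a|_{Y_\rho}^2$ into $C\big(|\vec a|_{X_\rho}+|\vec a|_{X_\rho}^2\big)|\vec a|_{Y_\rho}^2$, and substituting the resulting bound into the identity \eqref{lambeq}, whose $\rho'$-term passes through unchanged. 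Your remarks on the delicate points (the treatment of $\mathscr L(u,\mathcal U)$ via the split at $j=[m/2]$ and Young's inequality \eqref{dis}, and the absolute continuity of $t\mapsto\norm{\partial_x^m\lambda(t)}_{L^2}^2$) likewise match where the paper places the real work, upstream of this assembly step.
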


We now turn to derive the estimate on $\mathcal U$. 

\begin{lemma}[Estimate on $\mathcal U$]\label{lem3}
	 Under the same assumption as in Theorem \ref{th:ape}, we have
	\begin{multline*}
	 \frac{1}{2}\frac{d}{dt}\sum_{m=0}^{+\infty}L_{\rho,m+3}^2\norm{\partial_x^{m} \mathcal U}_{L^2}^2+\sum_{m=0}^{+\infty}L_{\rho,m+3}^2\norm{                          \partial_x^{m}\partial_y \mathcal U}_{L^2}^2+\frac12 \sum_{m=0}^{+\infty}L_{\rho,m+3}^2\norm{\partial_x^{m} \mathcal U}_{L^2}^2\\
	 \leq \frac{1}{2}\sum_{m=0}^{+\infty}  L_{\rho,m+2}^2 \norm{\partial_{x}^{m}\lambda}_{L^2}^2+C|\vec a|_{X_\rho} |\vec a|_{Y_\rho}^2.
		\end{multline*}

\end{lemma}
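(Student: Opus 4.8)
The plan is to run the same weighted energy scheme for $\mathcal U$ that produced Corollary \ref{corlam} for $\lambda$, now based on the evolution equation \eqref{ymau}. First I would apply $\partial_x^m$ to \eqref{ymau}, pair the result with $\partial_x^m\mathcal U$ in $L^2$, multiply by $L_{\rho,m+3}^2$, and sum over $m\geq 0$. Exactly as for $\lambda$, the regularity recorded in \eqref{reg:mu} guarantees that $t\mapsto\norm{\partial_x^m\mathcal U}_{L^2}^2$ is absolutely continuous and that the time derivative may be read through the $H^{-1}$–$H^1_0$ duality pairing $\comi{\partial_t\partial_x^m\mathcal U,\,\partial_x^m\mathcal U}$. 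Combined with \eqref{dev:rho}, this yields $\frac12\frac{d}{dt}\big(L_{\rho,m+3}^2\norm{\partial_x^m\mathcal U}_{L^2}^2\big)$ together with the extra term $\rho'\frac{m+4}{\rho}L_{\rho,m+3}^2\norm{\partial_x^m\mathcal U}_{L^2}^2$, which is nonpositive by \eqref{eqvi} and is therefore simply discarded.

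Next I would dispose of the contributions from the Prandtl operator on the left of \eqref{ymau}. Writing $w=\partial_x^m\mathcal U$, the transport part satisfies $\inner{u\partial_x w+v\partial_y w,\ w}_{L^2}=0$ after integration by parts, using the divergence-free relation $\partial_x u+\partial_y v=0$ together with $v|_{y=0}=0$ and the decay of $\mathcal U$ as $y\to+\infty$. The diffusion term contributes $\norm{\partial_x^m\partial_y\mathcal U}_{L^2}^2$, the boundary terms vanishing because $\partial_y\mathcal U|_{y=0}=0$ and $\mathcal U|_{y\to+\infty}=0$, while the damping term $\mathcal U$ contributes $\norm{\partial_x^m\mathcal U}_{L^2}^2$. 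This reproduces the full left-hand side of the claimed inequality, save that the coefficient of the damping sum will be reduced from $1$ to $\tfrac12$ in the last step.

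The decisive right-hand term is the linear one, $\partial_x\lambda$. Since $L_{\rho,m+3}=L_{\rho,(m+1)+2}$, Cauchy–Schwarz followed by Young's inequality gives $\sum_m L_{\rho,m+3}^2\inner{\partial_x^{m+1}\lambda,\ \partial_x^m\mathcal U}_{L^2}\leq \tfrac12\sum_m L_{\rho,m+3}^2\norm{\partial_x^{m+1}\lambda}_{L^2}^2+\tfrac12\sum_m L_{\rho,m+3}^2\norm{\partial_x^m\mathcal U}_{L^2}^2$; reindexing the first sum by $k=m+1$ turns it into $\tfrac12\sum_{k\geq 1}L_{\rho,k+2}^2\norm{\partial_x^k\lambda}_{L^2}^2\leq\tfrac12\sum_m L_{\rho,m+2}^2\norm{\partial_x^m\lambda}_{L^2}^2$, which is precisely the $\lambda$-term on the right of the lemma, while the second sum is absorbed into the damping term on the left and accounts for the factor $\tfrac12$ there. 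The remaining right-hand terms are the commutators $[\partial_x^m,u\partial_x]\mathcal U$ and $[\partial_x^m,v\partial_y]\mathcal U$ produced when $\partial_x^m$ is moved through the transport operator, together with the nonlinear terms $(\partial_x u)\mathcal U$ and $(\partial_x\partial_y u)\int_0^y\mathcal U\,d\tilde y$ coming from \eqref{ymau}. Each of these I would bound by $C|\vec a|_{X_\rho}|\vec a|_{Y_\rho}^2$ by the Leibniz-splitting technique of Lemmas \ref{lem2} and \ref{lem1}: split each Leibniz sum at $j=[m/2]$, place the low-order factor in $L_x^\infty L_y^2$ and the high-order factor in $L^2$ (or the reverse), and close with the discrete convolution inequalities \eqref{dis}–\eqref{conv}. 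The polynomial weights $\tfrac{(m+4)^3}{\rho^3}$ and $\tfrac{m+j+1}{\rho}$ built into $|\vec a|_{Y_\rho}$ are what absorb the combinatorial factors; for instance the leading commutator term $\sim m\,(\partial_x u)\partial_x^m\mathcal U$ closes because $m\leq (m+4)^3/\rho^3$ when $\rho\leq 1$.

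I expect the main obstacle to be the nonlocal pieces, namely the commutator $[\partial_x^m,v\partial_y]\mathcal U$ with $v=-\int_0^y\partial_x u\,d\tilde y$ and the term $(\partial_x\partial_y u)\int_0^y\mathcal U\,d\tilde y$, where the vertical integrals must be controlled in weighted $L_y^\infty$ norms through the $\comi y^{\ell+j}$ and $\tau^{\ell+j}$ weights of Definition \ref{defgev}, precisely so that no tangential derivative is lost. This is where the bookkeeping of the $\ell$-dependent weights, as carried out in Step~1 of the proof of Lemma \ref{lem2}, is most delicate, and it is the only point at which the structure of $\mathcal U$ (rather than a generic forcing) really enters.
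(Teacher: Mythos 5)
Your proposal follows the paper's proof almost verbatim: the weighted energy estimate on \eqref{ymau}, discarding the nonpositive $\rho'\frac{m+4}{\rho}$ term, the Cauchy--Schwarz/Young treatment of the $\partial_x^{m+1}\lambda$ term using $L_{\rho,m+3}=L_{\rho,(m+1)+2}$ and reindexing, with $\frac12\sum L_{\rho,m+3}^2\norm{\partial_x^m\mathcal U}_{L^2}^2$ absorbed into the damping sum, and the Leibniz-splitting/discrete-convolution treatment of the nonlinear terms are exactly the paper's steps. On one point you are actually more careful than the paper: the paper's displayed energy inequality silently omits the convection commutators $[\partial_x^m,u\partial_x]\mathcal U$ and $[\partial_x^m,v\partial_y]\mathcal U$, which you correctly retain (they are nonzero; in the paper they surface only later, inside $K_m$ of \eqref{rekm} in the proof of Proposition \ref{prpaux}).

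However, your claim that $[\partial_x^m,v\partial_y]\mathcal U$ can be bounded by $C|\vec a|_{X_\rho}|\vec a|_{Y_\rho}^2$ is a genuine gap. That commutator consists of terms $(\partial_x^jv)\,\partial_x^{m-j}\partial_y\mathcal U$, and the quantities $\norm{\partial_x^{m-j}\partial_y\mathcal U}_{L^2}$ are controlled by no part of $|\vec a|_{X_\rho}$ or $|\vec a|_{Y_\rho}$ --- neither norm contains any $y$-derivative of $\mathcal U$ --- but only by $|\vec a|_{Z_\rho}$. Indeed, when the paper estimates the same expression (the term $S_2$ in the proof of Proposition \ref{prpaux}) it obtains $C|\vec a|_{X_\rho}^2|\vec a|_{Z_\rho}^2$, not a $Y_\rho$ bound. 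To close the argument you must either allow a term of the form $C|\vec a|_{X_\rho}\big(|\vec a|_{Y_\rho}^2+|\vec a|_{Z_\rho}^2\big)$ on the right-hand side, or integrate by parts in $y$ (using $\partial_yv=-\partial_xu$, $v|_{y=0}=0$ and the decay of $\mathcal U$) and absorb the resulting $\partial_x^m\partial_y\mathcal U$ factor into the left-hand diffusion sum, which reduces its coefficient from $1$ to $\frac12$. Either repair alters the inequality as stated in the lemma, but is harmless for its only use, in \eqref{uplamd}, since Proposition \ref{prpaux} tolerates bounds of the form $C\inner{|\vec a|_{X_\rho}+|\vec a|_{X_\rho}^2}\inner{|\vec a|_{Y_\rho}^2+|\vec a|_{Z_\rho}^2}$ --- this is the same repair the paper's own abbreviated proof implicitly requires.
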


\begin{proof}
By \eqref{reg:mu}	 and the boundary condition that $\partial_y\mathcal U|_{y=0}=\mathcal U|_{y=+\infty}=0,$  the energy  estimate on \eqref{ymau} gives
\begin{multline*}
	 \frac{1}{2}\frac{d}{dt}\sum_{m=0}^{+\infty}L_{\rho,m+3}^2\norm{\partial_x^{m} \mathcal U}_{L^2}^2+\sum_{m=0}^{+\infty}L_{\rho,m+3}^2\norm{                          \partial_x^{m}\partial_y \mathcal U}_{L^2}^2+\sum_{m=0}^{+\infty}L_{\rho,m+3}^2\norm{\partial_x^{m} \mathcal U}_{L^2}^2\\
	 \leq  \rho' \sum_{m=0}^{+\infty} \frac {m+4 }{\rho }L_{\rho,m+3}^2 \norm{\partial_{x}^{m}\mathcal U}_{L^2}^2+\sum_{m=0}^{+\infty}  L_{\rho,m+3}^2 \norm{\partial_{x}^{m+1}\lambda}_{L^2}\norm{\partial_{x}^{m}\mathcal U}_{L^2} \\
		      +\sum_{m=0}^{+\infty}L_{\rho,m+3}^2\Big(\partial_x^m \Big[(\partial_x\partial_yu)\int_0^y\mathcal U  d\tilde y+(\partial_xu)\mathcal U \Big],\    \partial_x^m\mathcal U\Big)_{L^2}.
	\end{multline*}
Note that  the first term on the right side is non-positive and the second one  is bounded from above by
\begin{eqnarray*}
	\frac{1}{2}\sum_{m=0}^{+\infty}  L_{\rho,m+2}^2 \norm{\partial_{x}^{m}\lambda}_{L^2}^2+\frac12\sum_{m=0}^{+\infty}  L_{\rho,m+3}^2  \norm{\partial_{x}^{m}\mathcal U}_{L^2}^2.
\end{eqnarray*}
For the last term in the above inequality,  
 a similar argument as that for Lemmas \ref{lem2} and \ref{lem1}  yields that
\begin{eqnarray*}
	\begin{aligned}
	\sum_{m=0}^{+\infty}L_{\rho,m+3}^2\Big(\partial_x^m \Big[(\partial_x\partial_yu)\int_0^y\mathcal U  d\tilde y+(\partial_xu)\mathcal U \Big],\    \partial_x^m\mathcal U\Big)_{L^2} 	
	\leq  C|\vec a|_{X_\rho}^3\leq C |\vec a|_{X_\rho} |\vec a|_{Y_\rho}^2.
\end{aligned}
\end{eqnarray*}
Combining    the above estimates completes the proof of the lemma.
\end{proof}

\begin{proof}
	[\bf Proof of Proposition \ref{prpaux}] Combining the estimates in Corollary \ref{corlam} and Lemma \ref{lem3} gives that
	 \begin{equation}\label{uplamd}
		\begin{aligned}
	&\frac{1}{2}\frac{d}{dt}\sum_{m=0}^{+\infty}\bigg(L_{\rho,m+2}^2\norm{\partial_x^{m} \lambda}_{L^2}^2+L_{\rho,m+3}^2\norm{\partial_x^{m} \mathcal U}_{L^2}^2\bigg)\\
	&\quad+\sum_{m=0}^{+\infty}\bigg(L_{\rho,m+2}^2\norm{                          \partial_x^{m}\partial_y \lambda}_{L^2}^2+L_{\rho,m+3}^2\norm{\partial_x^{m} \partial_y\mathcal U}_{L^2}^2\bigg) +\frac12\sum_{m=0}^{+\infty}\bigg(L_{\rho,m+2}^2\norm{\partial_x^{m} \lambda}_{L^2}^2+L_{\rho,m+3}^2\norm{\partial_x^{m} \mathcal U}_{L^2}^2\bigg)\\
	&\leq \rho'\sum_{m=0}^{+\infty} \frac{m+3}{\rho}L_{\rho,m+2}^2\norm{\partial_x^{m} \lambda}_{L^2}^2+ C |\vec a|_{X_\rho}|\vec a|_{Z_\rho}^2+  C\big( |\vec a|_{X_\rho}+  |\vec a|_{X_\rho}^2\big)|\vec a|_{Y_\rho}^2\\
	&\leq \frac{\rho'}{2} \sum_{m=0}^{+\infty} \frac{m+3}{\rho}L_{\rho,m+2}^2\norm{\partial_x^{m} \lambda}_{L^2}^2+\frac{\rho'^3}{4} \sum_{m=0}^{+\infty} \frac{m+3}{\rho}L_{\rho,m+2}^2\norm{\partial_x^{m} \lambda}_{L^2}^2  + C\inner{ |\vec a|_{X_\rho}+  |\vec a|_{X_\rho}^2}\inner{|\vec a|_{Y_\rho}^2+|\vec a|_{Z_\rho}^2},
	\end{aligned}
	\end{equation}
	where we have used  the fact that  $\rho'/2\leq\rho'^3/2\leq \rho'^3/4$ in 
	the last inequality.  
	
	It remains to estimate the first term on the right hand side of \eqref{uplamd} as follows:
		\begin{equation}\label{kee}
		\begin{aligned}
&\frac12	\rho'\sum_{m=0}^{+\infty} \frac{m+3}{\rho}L_{\rho,m+2}^2\norm{\partial_x^{m} \lambda}_{L^2}^2+\frac{1}{4} \frac{d}{dt} \sum_{m=0}^{+\infty}  \rho'^2 \frac{(m+4)^2}{\rho^2}L_{\rho,m+3}^2\norm{   \partial_{x}^{m}\mathcal U}_{L^2}^2\\ 
	&\qquad+ \frac12 \rho'^2\sum_{m=0}^{+\infty} \frac{(m+4)^2}{\rho^2}  L_{\rho,m+3} ^2\norm{ \partial_{x}^{m}\mathcal U }_{L^2}^2\\
		&\leq \frac{1}{4} \rho'^3 \sum_{m=0}^{+\infty}\frac{(m+4)^3}{\rho^3}L_{\rho,m+3}^2 \norm{\partial_{x}^{m}\mathcal U}_{L^2}^2 
		      +C|\vec a|_{X_\rho}^2|\vec a|_{Z_\rho}^2+ C |\vec a|_{X_\rho}^2  |\vec a|_{Y_\rho}^2. 
		      \end{aligned}
	\end{equation}
Then,   Proposition \ref{prpaux} follows by combining  \eqref{uplamd} and \eqref{kee}.

To  prove \eqref{kee},  denote that
 \begin{equation}\label{def:p}
 P=\partial_t+u\partial_x +v\partial_y-\partial_y^2+1.
 \end{equation}
By applying  $\partial_x^m$ to \eqref{ymau}, we obtain that
\begin{equation}\label{lamu}
	\partial_x^{m+1}\lambda=P\partial_x^{m} \mathcal U+K_m,
\end{equation}
where
\begin{equation}\label{rekm}
	K_m=\sum_{j=1}^m{m\choose j}\big[(\partial_x^j u)\partial_{x}^{m-j+1} \mathcal U+(\partial_x^j v)\partial_{x}^{m-j}\partial_y \mathcal U \big]-\partial_x^{m}\Big[ (\partial_x\partial_yu)\int_0^y\mathcal U d\tilde y+ (\partial_xu)\mathcal U\Big].
\end{equation}
Then, by 
 observing that  
 \begin{eqnarray*}
	\frac12\rho'\sum_{m=0}^{+\infty} \frac{m+3}{\rho}L_{\rho,m+2}^2\norm{\partial_x^{m} \lambda}_{L^2}^2\leq   \frac12	\rho'\sum_{m=0}^{+\infty} \frac{m+4}{\rho}L_{\rho,m+3}^2\norm{\partial_x^{m+1} \lambda}_{L^2}^2,
\end{eqnarray*}
and 
\begin{eqnarray*}
  \frac12	\rho' \norm{\partial_x^{m+1} \lambda}_{L^2}^2+ \frac12	\rho' \norm{K_m}_{L^2}^2\leq  \frac14	\rho'\norm{P\partial_x^{m} \mathcal U}_{L^2}^2,
\end{eqnarray*}
because of \eqref{lamu}    and  the fact that $\rho'<0,$  
we   have that
\begin{equation}\label{tla}
  \frac12\rho'\sum_{m=0}^{+\infty} \frac{m+3}{\rho}L_{\rho,m+2}^2\norm{\partial_x^{m} \lambda}_{L^2}^2 
	 \leq  \frac14 \rho'  \sum_{m=0}^{+\infty} \frac{m+4}{\rho}L_{\rho,m+3}^2\norm{P\partial_x^{m} \mathcal U }_{L^2}^2-	 \frac12 \rho'\sum_{m=0}^{+\infty} \frac{m+4}{\rho}L_{\rho,m+3}^2\norm{K_m}_{L^2}^2.
\end{equation}

The two terms on the right hand side of the above inequality will be estimated as follows:

\noindent
 {\bf (i) Estimate on the last term on the right hand side of  \eqref{tla}}.   Noting that $ -\rho'/\rho\leq 1$,  by the defintion of $K_m$  in \eqref{rekm}, we have that
 \begin{equation}\label{fines}
 \begin{aligned}
	- \frac12\rho'\sum_{m=0}^{+\infty} \frac{m+4}{\rho}L_{\rho,m+3}^2\norm{K_m}_{L^2}^2\leq &  \sum_{m=0}^{+\infty}(m+4)  L_{\rho,m+3}^2\norm{K_m}_{L^2}^2\leq    S_1+S_2+S_3,
		\end{aligned}
	\end{equation}
where
\begin{eqnarray*}
	\begin{aligned}
S_1&=4 \sum_{m=0}^{+\infty}   \bigg( (m+4)^{1/2} L_{\rho,m+3}\sum_{j=1}^{m}    \frac{m!}{j!(m-j)!}      \norm{   (\partial_{x}^j u ) \partial_{x}^{m-j+1}\mathcal U}_{L^2} \bigg)^2,\\  
	S_2&=4\sum_{m=0}^{+\infty}   \bigg((m+4)^{1/2}L_{\rho,m+3}\sum_{j=1}^{m}    \frac{m!}{j!(m-j)!}      \norm{   (\partial_{x}^j v ) \partial_{x}^{m-j}\partial_y\mathcal U}_{L^2} \bigg)^2,  
	\end{aligned}
\end{eqnarray*}
and 
\begin{eqnarray*}
	S_3=4\sum_{m=0}^{+\infty}   \bigg( (m+4)^{1/2}  L_{\rho,m+3}       \big\|  \partial_x^{m}\big[ (\partial_x\partial_yu)\int_0^y\mathcal U d\tilde y+ (\partial_xu)\mathcal U\big]\big\|_{L^2} \bigg)^2.
\end{eqnarray*} 
We first estimate  $S_1$. Note that 
\begin{eqnarray*}
	\forall\ 1\leq j\leq [m/2], \quad      (m+4)^{1/2} \frac{m!}{j!(m-j)!} \frac{L_{\rho,m+3}}{L_{\rho,j+3}L_{\rho,m-j+4}}   \leq   \frac{C}{ j+1}\frac{(m-j+5)^{3/2}}{\rho^{3/2}},
\end{eqnarray*}
and
\begin{eqnarray*}
	\forall\   [m/2]+1\leq j\leq m, \quad      (m+4)^{1/2} \frac{m!}{j!(m-j)!} \frac{L_{\rho,m+3}}{L_{\rho,j+1}L_{\rho,m-j+6}}   \leq  \frac{C}{m-j+1}.
\end{eqnarray*}
By \eqref{dis}, we obtain that
\begin{eqnarray*}
	\begin{aligned}
		S_1&\leq C \sum_{m=0}^{+\infty}   \bigg(  \sum_{j=1}^{[m/2]}          \frac{L_{\rho, j+3}\norm{    \partial_{x}^j u }_{L^\infty}}{j+1} \frac{(m-j+5)^{3/2}}{\rho^{3/2}} L_{\rho,m-j+4}\norm{   \partial_{x}^{m-j+1}\mathcal U}_{L^2} \bigg)^2\\
	&\quad+C	\sum_{m=0}^{+\infty}   \bigg(  \sum_{j= [m/2]+1}^m           L_{\rho, j+1}\norm{    \partial_{x}^j u }_{L_x^2L_y^\infty}\frac{  L_{\rho,m-j+6}\norm{   \partial_{x}^{m-j+1}\mathcal U}_{L_x^\infty L_y^2}}{m-j+1} \bigg)^2\\
	&\leq C|\vec a|_{X_\rho}^2|\vec a|_{Y_\rho}^2+C|\vec a|_{X_\rho}^4\leq C|\vec a|_{X_\rho}^2|\vec a|_{Y_\rho}^2.
	\end{aligned}
\end{eqnarray*}
Similarly, we have that
\begin{multline*}
	S_2   \leq \sum_{m=0}^{+\infty}   \bigg(  \sum_{j=1}^{[m/2]}          \frac{L_{\rho, j+3}\norm{  \comi y^\ell  \partial_{x}^{j+1} u }_{L_x^\infty L_y^2}}{j+1}  L_{\rho,m-j+3}\norm{   \partial_{x}^{m-j}\partial_y \mathcal U}_{L^2} \bigg)^2\\
	 \quad+	\sum_{m=0}^{+\infty}   \bigg(  \sum_{j= [m/2]+1}^m           L_{\rho, j+1}\norm{   \comi y^\ell  \partial_{x}^{j+1} u }_{L^2}\frac{  L_{\rho,m-j+5}\norm{   \partial_{x}^{m-j}\partial_y\mathcal U}_{L_x^\infty L_y^2}}{m-j+1} \bigg)^2 
	 \leq  C|\vec a|_{X_\rho}^2|\vec a|_{Z_\rho}^2,
\end{multline*}
and
\begin{eqnarray*}
	S_3\leq C|\vec a|_{X_\rho}^2|\vec a|_{Y_\rho}^2.
\end{eqnarray*}
Substituting the above estimates into \eqref{fines} yields
that
 \begin{equation}\label{ukm}
	-\frac12 \rho'\sum_{m=1}^{+\infty} \frac{m+4}{\rho}L_{\rho,m+3}^2\norm{K_m}_{L^2}^2\leq  C|\vec a|_{X_\rho}^2|\vec a|_{Z_\rho}^2+ C |\vec a|_{X_\rho}^2  |\vec a|_{Y_\rho}^2.
\end{equation}

\noindent
 {\bf (ii) Estimate on the first term on the right hand side of  \eqref{tla}}.   
 Direct calculation gives that
 \begin{multline*}
  	\rho'\frac{m+4}{\rho}L_{\rho,m+3}^2 \norm{P \partial_x^{m} \mathcal U}_{L^2}^2 
		    = \rho'\frac{m+4}{\rho}  \norm{ P\big(L_{\rho,m+3} \partial_{x}^{m}\mathcal U\big)}_{L^2}^2+ \rho'^3 \frac{(m+4)^3}{\rho^3}L_{\rho,m+3}^2 \norm{\partial_{x}^{m}\mathcal U}_{L^2}^2\\
		 \quad   -2 \rho'^2 \frac{(m+4)^2}{\rho^2}  \inner{P \big(L_{\rho,m+3} \partial_{x}^{m}\mathcal U\big),\ L_{\rho,m+3}\partial_{x}^{m}\mathcal U}_{L^2}.
 \end{multline*}
 For the last term in the above inequality, by \eqref{def:p}, one has that
 \begin{eqnarray*}
 	\inner{P \big(L_{\rho,m+3} \partial_{x}^{m}\mathcal U\big),\ L_{\rho,m+3}\partial_{x}^{m}\mathcal U}_{L^2}\geq \frac{1}{2}\frac{d}{dt} \norm{ L_{\rho,m+3} \partial_{x}^{m}\mathcal U }_{L^2}^2+L_{\rho,m+3} ^2\norm{ \partial_{x}^{m}\mathcal U }_{L^2}^2.
 \end{eqnarray*}
Thus, 
\begin{multline*}
		   \rho'\frac{m+4}{\rho}L_{\rho,m+3}^2 \norm{P \partial_x^{m} \mathcal U}_{L^2}^2 		\\
		    \leq   \rho'^3 \frac{(m+4)^3}{\rho^3}L_{\rho,m+3}^2 \norm{\partial_{x}^{m}\mathcal U}_{L^2}^2-2 \rho'^2 \frac{(m+4)^2}{\rho^2}  L_{\rho,m+3} ^2\norm{ \partial_{x}^{m}\mathcal U }_{L^2}^2   -  \rho'^2 \frac{(m+4)^2}{\rho^2} \frac{d}{dt} \norm{ L_{\rho,m+3} \partial_{x}^{m}\mathcal U }_{L^2}^2.
		 \end{multline*}
	Here,  the last term can be written as   
\begin{eqnarray*}
\begin{aligned}
	& -  \rho'^2 \frac{(m+4)^2}{\rho^2} \frac{d}{dt} \norm{ L_{\rho,m+3} \partial_{x}^{m}\mathcal U  }_{L^2}^2 \\
	&\quad  =-  \frac{d}{dt} \Big( \rho'^2 \frac{(m+4)^2}{\rho^2}L_{\rho,m+3}^2\norm{    \partial_{x}^{m}\mathcal U }_{L^2}^2\Big)+2\rho'\inner{ \rho''-\rho'^2/\rho }\frac{(m+4)^2}{\rho^2} L_{\rho,m+3} ^2\norm{  \partial_{x}^{m}\mathcal U }_{L^2}^2\\
	 &\quad\leq -  \frac{d}{dt} \Big( \rho'^2 \frac{(m+4)^2}{\rho^2}L_{\rho,m+3}^2\norm{    \partial_{x}^{m}\mathcal U}_{L^2}^2\Big), 
	 \end{aligned} 
\end{eqnarray*}
where we have used  \eqref{eqvi} in  the last inequality. Hence, 
\begin{multline*}
		  \rho'\frac{m+4}{\rho}L_{\rho,m+3}^2 \norm{P \partial_x^{m} \mathcal U}_{L^2}^2  
		 \leq   \rho'^3 \frac{(m+4)^3}{\rho^3}L_{\rho,m+3}^2 \norm{\partial_{x}^{m}\mathcal U}_{L^2}^2\\ 
		  -2 \rho'^2 \frac{(m+4)^2}{\rho^2}  L_{\rho,m+3} ^2\norm{ \partial_{x}^{m}\mathcal U }_{L^2}^2  -  \frac{d}{dt} \Big( \rho'^2 \frac{(m+4)^2}{\rho^2}L_{\rho,m+3}^2\norm{   \partial_{x}^{m}\mathcal U}_{L^2}^2\Big).
\end{multline*}
Substituting the above estimate and \eqref{ukm} into \eqref{tla} yields \eqref{kee}. Thus, the proof of proposition  is complete.
\end{proof}

 \subsection{Estimate on the tangential velocity}\label{subsec:u}
 
We now derive the estimate on $\norm{u}_{X_\rho}$.

\begin{proposition}\label{prop:u}
	Under the same assumption as in Theorem \ref{th:ape}, the  estimate  
	\begin{eqnarray*}
	\begin{aligned}
	& \frac{1}{2}\frac{d}{dt} \sum_{m=0}^{+\infty}L_{\rho,m+k}^2\norm{\tau^{\ell+k}\partial_x^{m}\partial_y^k u}_{L^2}^2+\frac14 \sum_{m=0}^{+\infty}L_{\rho,m+k}^2\norm{  \tau^{\ell+k}                       \partial_x^{m}\partial_y^{k+1}u}_{L^2}^2  +\frac14  \sum_{m=0}^{+\infty}L_{\rho,m+k}^2\norm{\tau^{\ell+k}\partial_x^{m} \partial_y^ku}_{L^2}^2 \\
	&  \leq \frac14 \rho'^3 \sum_{m=0}^{+\infty} \frac{m+k+1}{\rho} L_{\rho,m+k}^2 \norm{\tau^{\ell+k}\partial_x^m\partial_y^ku}_{L^2}^2+C \big( |\vec a|_{X_\rho} +|\vec a|_{X_\rho}^2\big) |\vec a|_{Y_\rho}^2 
	  \end{aligned}
	 \end{eqnarray*}
holds for any $0\leq k\leq 3,$ where $\tau$ is defined as in \eqref{tauweigh}.
\end{proposition}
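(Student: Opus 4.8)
The plan is to carry out, for each fixed $0\le k\le 3$, a weighted $L^2$ energy estimate on $w_{m,k}:=\partial_x^m\partial_y^k u$ with spatial weight $\tau^{2(\ell+k)}$ and Gevrey prefactor $L_{\rho,m+k}^2$, and then to sum over $m\ge 0$. Concretely I would apply $\partial_x^m\partial_y^k$ to the equation $(\partial_t+u\partial_x+v\partial_y-\partial_y^2+1)u=0$ from \eqref{2dprandtl} and pair with $\tau^{2(\ell+k)}w_{m,k}$. The $\partial_t$ term gives $\frac12\frac{d}{dt}\norm{\tau^{\ell+k}w_{m,k}}_{L^2}^2$, while time-differentiating $L_{\rho,m+k}^2$ produces, via \eqref{dev:rho}, the factor $\rho'\frac{m+k+1}{\rho}L_{\rho,m+k}^2\norm{\tau^{\ell+k}w_{m,k}}_{L^2}^2$. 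The damping $+u$ reproduces the good zero-order term $\norm{\tau^{\ell+k}w_{m,k}}_{L^2}^2$, and the diffusion, after one integration by parts in $y$, yields the good term $\norm{\tau^{\ell+k}\partial_x^m\partial_y^{k+1}u}_{L^2}^2$ together with a boundary contribution at $y=0$ and a commutator involving $\partial_y\tau^{2(\ell+k)}$.

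Next I would dispose of the weight commutator and the boundary terms. Since $\partial_y\tau^{2(\ell+k)}=2(\ell+k)y\,\tau^{2(\ell+k)-2}$ and $|y|\tau^{-2}\le\frac{1}{2\sqrt{N}}$, Cauchy--Schwarz and Young bound the commutator by $\frac14\norm{\tau^{\ell+k}\partial_x^m\partial_y^{k+1}u}_{L^2}^2+\frac{(\ell+k)^2}{N}\norm{\tau^{\ell+k}w_{m,k}}_{L^2}^2$, whose zero-order part is absorbable once $N$ is large as guaranteed by \eqref{rel:Nell}; this is precisely what lowers the coefficients of the two good terms to $\frac14$ in the statement. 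The boundary term is proportional to $\partial_yw_{m,k}(0)\,w_{m,k}(0)$: it vanishes for $k=0,2$ because $u|_{y=0}=0$ and $\partial_y^2u|_{y=0}=0$ (the latter read off from the equation together with $u=v=0$ at $y=0$), and for $k=1$ because $\partial_yw_{m,1}(0)=\partial_x^m\partial_y^2u(0)=0$; only for $k=3$ does a residual term survive, which I would control by a trace/interpolation inequality absorbed into the remaining fraction of the diffusion term.

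For the Gevrey-weight term I would simply pass from $\rho'$ to $\rho'^3$. By \eqref{eqvi} one has $\rho'\le\rho'^3<0$ and $-1\le\rho'<0$, whence $\rho'\le\frac14\rho'^3$; multiplying by the nonnegative quantity $\frac{m+k+1}{\rho}L_{\rho,m+k}^2\norm{\tau^{\ell+k}w_{m,k}}_{L^2}^2$ and summing gives exactly the term $\frac14\rho'^3\sum_m\frac{m+k+1}{\rho}L_{\rho,m+k}^2\norm{\tau^{\ell+k}\partial_x^m\partial_y^ku}_{L^2}^2$ on the right-hand side.

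The bulk of the work, and the main obstacle, is the convection. The leading transport part $u\partial_xw_{m,k}+v\partial_yw_{m,k}$ is neutralized by the divergence-free relation $\partial_xu+\partial_yv=0$, leaving only the nonlinear remainder $-\frac12(w_{m,k}^2,\,v\,\partial_y\tau^{2(\ell+k)})_{L^2}$, which I would bound using $v=-\int_0^y\partial_xu\,d\tilde y$. The remaining Leibniz commutators of $\partial_x^m\partial_y^k$ against $u\partial_x$ and $v\partial_y$ are estimated exactly as in Lemmas \ref{lem2} and \ref{lem1}: split each convolution at $j=[m/2]$, place the low-index factor in $L^\infty$ or $L_x^\infty L_y^2$, use the $\comi y^{-\ell}$-Hardy bound for the $\int_0^y$ factors, and apply Young's convolution inequalities \eqref{dis}--\eqref{conv}, so that all of these are dominated by $C(|\vec a|_{X_\rho}+|\vec a|_{X_\rho}^2)|\vec a|_{Y_\rho}^2$; the high normal-derivative factors produced when $k=3$ are absorbed into the available fraction of the diffusion term, which is why no $|\vec a|_{Z_\rho}$ appears on the right. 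The one genuinely dangerous term is the loss-of-derivative piece $(\partial_x^mv)\partial_yu=-\big(\int_0^y\partial_x^{m+1}u\,d\tilde y\big)\partial_yu$ occurring only at $k=0$ (for $k\ge1$ the extra normal derivatives make the Gevrey weight satisfy $L_{\rho,m+k}\le L_{\rho,m+1}$ and no derivative is lost), where $L_{\rho,m}$ is the wrong weight for $\partial_x^{m+1}u$. I would resolve it through the auxiliary functions, inserting \eqref{ma} in the form $\partial_x^{m+1}u=\partial_x^{m-2}\lambda+\partial_x^{m-2}\big[(\partial_yu)\int_0^y\mathcal U\,d\tilde y\big]$ for $m\ge2$ (treating $m\le1$ directly), thereby transferring the lost derivative onto $\lambda$ and $\mathcal U$, whose stronger $|\vec a|_{Y_\rho}$-weights $\frac{m+3}{\rho}L_{\rho,m+2}^2$ and especially the cubic $\frac{(m+4)^3}{\rho^3}L_{\rho,m+3}^2$ supply the missing powers of $(m+1)/\rho$. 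Collecting the good terms on the left, absorbing the weight commutators through \eqref{rel:Nell}, and bounding the nonlinearities as above then yields the claimed inequality for every $0\le k\le3$.
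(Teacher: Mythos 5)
Your treatment of the main case $k=0$ coincides with the paper's: the same weighted energy identity, absorption of the $\tau$-commutators via \eqref{rel:Nell}, the passage from $\rho'$ to $\tfrac14\rho'^3$ using \eqref{eqvi}, the divergence structure for the transport term, the Leibniz splitting at $j=[m/2]$ with Young's convolution inequality, and—crucially—the resolution of the loss-of-derivative term $(\partial_x^m v)\partial_yu$ by writing, for $m\geq 2$, $\partial_x^{m+1}u=\partial_x^{m-2}\lambda+\partial_x^{m-2}\big[(\partial_yu)\int_0^y\mathcal U\, d\tilde y\big]$, which is exactly how the paper exploits \eqref{ma} together with \eqref{prest}. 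The cases $k=1,2$ are also sound, since there the boundary terms do vanish thanks to $\partial_y^2u|_{y=0}=0$ (read off from the equation at $y=0$), as you say.

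The genuine gap is at $k=3$. You apply $\partial_x^m\partial_y^3$ to \eqref{2dprandtl} and test against $\tau^{2(\ell+3)}\partial_x^m\partial_y^3u$; integrating the diffusion by parts then produces the boundary integral
\begin{equation*}
N^{\ell+3}\int_{\mathbb R}\big(\partial_x^m\partial_y^{4}u\big)\big(\partial_x^m\partial_y^{3}u\big)\Big|_{y=0}\,dx ,
\end{equation*}
and neither factor vanishes: from the equation one finds $\partial_y^3u|_{y=0}=\partial_t\partial_yu|_{y=0}+\partial_yu|_{y=0}$ and $\partial_y^4u|_{y=0}=(\partial_yu)(\partial_x\partial_yu)|_{y=0}$, both nonzero in general. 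Your proposed fix—``a trace/interpolation inequality absorbed into the remaining fraction of the diffusion term''—cannot work as stated: any trace bound for $\partial_x^m\partial_y^4u$ at $y=0$ requires control of $\partial_x^m\partial_y^5u$ in $L^2$, which is available neither in $X_\rho$ (whose definition stops at three normal derivatives) nor from the energy (which only yields $\partial_y^{4}u$). This obstruction is precisely why the paper proceeds differently for $k=3$: it applies only $\partial_x^m\partial_y^2$ to the equation (see \eqref{umau4}) and tests against $-\partial_y\big(\tau^{2(\ell+3)}\partial_x^m\partial_y^3u\big)$, so that after integration by parts every boundary contribution carries a factor $\partial_t\partial_x^m\partial_y^{2}u|_{y=0}$ or $\partial_x^m\partial_y^{2}u|_{y=0}$, which vanishes by $\partial_y^2u|_{y=0}=0$, and no trace of $\partial_y^4u$ ever appears. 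Alternatively, you could salvage your route by first using the equation at $y=0$ to replace $\partial_x^m\partial_y^4u|_{y=0}$ with $\partial_x^m\big[(\partial_yu)(\partial_x\partial_yu)\big]|_{y=0}$ and then estimating traces of controlled quantities, but that step is essential and is missing from your argument.
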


\begin{proof} We will basically give the estimates in the cases of  when $k=0$ and $k=3$.
	
{\bf (a) The case of when \boldmath $k=0$}.  
We first derive the estimate on  tangential derivatives.  
Applying  $  \partial_x^m$ to the first equation in \eqref{2dprandtl} gives that
  \begin{equation*} 
\big(\partial_t+u\partial_x+v\partial_y -\partial_y^2\big)  \partial_x^m  u + \partial_x^m  u
= 
 -\sum_{j=1}^{m}{{m}\choose j}    (\partial_x^j u) \partial_x^{m-j+1} u -\sum_{j=1}^{m-1}{{m}\choose j}     \big(\partial_x^j v\big)\partial_x^{m-j}\partial_y u-(\partial_x^m v )\partial_yu.
    	\end{equation*}
 Multiplying   by   $\tau^{2\ell} \partial_x^mu$ on both sides of the above equation and then  integrating  over $\mathbb R_+^2$, we have, by    observing $|\partial_y\tau^{2\ell}|\leq N^{-1/2}\ell\tau^{2\ell}$	 and $|\partial_y^2\tau^{2\ell}|\leq N^{-1}(\ell+\ell^2)\tau^{2\ell}$, that 
 \begin{equation*}  \begin{aligned}
 &\frac{1}{2} \frac{d}{dt}   \norm{\tau^{\ell}\partial_x^mu}_{L^2}^2+  \big\|\tau^{\ell}\partial_y \partial_x^mu \big\|_{L^2}^2+  \big\| \tau^{\ell}\partial_x^mu \big\|_{L^2}^2\\
& \leq \frac{1}{2}\frac{\ell}{N^{1/2}} \norm{ v\tau^{\ell} \partial_x^mu}_{L^2}\norm{  \tau^{\ell} \partial_x^mu}_{L^2}+ \frac{1}{2}\frac{\ell+\ell^2}{N} \norm{  \tau^{\ell} \partial_x^mu}_{L^2}^2-\sum_{j=1}^{m}{{m}\choose j}   \Big(  \tau^{\ell}  (\partial_x^j u) \partial_x^{m-j+1} u, \  \tau^{\ell}\partial_x^mu\Big)_{L^2}\\
&\quad-\sum_{j=1}^{m-1}{{m}\choose j}  \Big(   \tau^{\ell}   \big(\partial_x^j v\big)\partial_x^{m-j}\partial_y u, \  \tau^{\ell}\partial_x^mu\Big)_{L^2}-\Big(  \tau^{\ell}(\partial_x^m v )\partial_yu, \  \tau^{\ell}\partial_x^mu\Big)_{L^2}.
 \end{aligned}
 \end{equation*}
 This, with \eqref{rel:Nell}, yields that
\begin{equation}\label{dps}
 	\begin{aligned}
 &\frac{1}{2} \frac{d}{dt}  \sum_{m=0}^{+\infty} L_{\rho,m}^2 \norm{\tau^{\ell}\partial_x^mu}_{L^2}^2+     \sum_{m=0}^{+\infty} L_{\rho,m}^2 \|\tau^{\ell}\partial_y \partial_x^mu \|_{L^2}^2+\frac12  \sum_{m=0}^{+\infty} L_{\rho,m}^2    \| \tau^{\ell}\partial_x^mu \|_{L^2}^2\\
& \leq \sum_{m=0}^{+\infty} \rho' \frac{m+1}{\rho} L_{\rho,m}^2 \norm{\tau^{\ell}\partial_x^mu}_{L^2}^2+C  \sum_{m=0}^{+\infty} L_{\rho,m}^2 \norm{ v\tau^{\ell} \partial_x^mu}_{L^2}\norm{  \tau^{\ell} \partial_x^mu}_{L^2}\\
&\quad+\sum_{m=0}^{+\infty} \sum_{j=1}^{m}{{m}\choose j} L_{\rho,m}^2\norm{\tau^\ell  (\partial_x^j u) \partial_x^{m-j+1} u}_{L^2}\norm{\tau^{\ell}\partial_x^mu}_{L^2}\\
&\quad+\sum_{m=0}^{+\infty} \sum_{j=1}^{m-1}{{m}\choose j} L_{\rho,m}^2\norm{\tau^\ell  (\partial_x^j v) \partial_x^{m-j}\partial_y u}_{L^2}\norm{\tau^{\ell}\partial_x^mu}_{L^2} 
+\sum_{m=0}^{+\infty}   L_{\rho,m}^2\norm{\tau^\ell  (\partial_x^m v) \partial_y u}_{L^2}\norm{\tau^{\ell}\partial_x^mu}_{L^2}.
 \end{aligned}
 \end{equation}
Direct calculation gives that
\begin{equation}\label{fiest}
	 \sum_{m=0}^{+\infty} L_{\rho,m}^2 \norm{ v\tau^{\ell} \partial_x^mu}_{L^2}\norm{  \tau^{\ell} \partial_x^mu}_{L^2}\leq C |\vec a|_{X_\rho}^3. 
\end{equation}
Similarly to the proofs of  Lemmas  \ref{lem2} and \ref{lem1},  we have that
\begin{multline}\label{seest}
	\sum_{m=0}^{+\infty} \sum_{j=1}^{m}{{m}\choose j} L_{\rho,m}^2\norm{\tau^\ell  (\partial_x^j u) \partial_x^{m-j+1} u}_{L^2}\norm{\tau^{\ell}\partial_x^mu}_{L^2}\\
 +\sum_{m=0}^{+\infty} \sum_{j=1}^{m-1}{{m}\choose j} L_{\rho,m}^2\norm{\tau^\ell  (\partial_x^j v) \partial_x^{m-j}\partial_y u}_{L^2}\norm{\tau^{\ell}\partial_x^mu}_{L^2}\leq  C |\vec a|_{X_\rho} |\vec a|_{Y_\rho}^2.
\end{multline}
It remains to estimate the last term in \eqref{dps}. Direct calculation gives that
\begin{eqnarray*}
	\forall\ 0\leq m\leq 1,\quad  L_{\rho,m}^2\norm{\tau^\ell  (\partial_x^m v) \partial_y u}_{L^2}\norm{\tau^{\ell}\partial_x^mu}_{L^2}\leq C|\vec a|_{X_\rho}^3. 
\end{eqnarray*}
For $m\geq 2$, by  recalling $\mathscr L(u,\mathcal U)$ given in \eqref{f}, we use \eqref{ma} to write that
\begin{multline*}
	\partial_x^mv=-\int_0^y\partial_x^{m+1} u(t,x,\tilde y) d\tilde y\\=-\int_0^y \partial_x^{m-2} \lambda (t,x,\tilde y) d\tilde y-\partial_x^{m-2} \underbrace{ \int_0^y   \bigg(\partial_yu(t,x,\tilde y)\int_0^{\tilde y} \mathcal U(t,x,r)dr\bigg) d\tilde y}_{=\mathscr L(u,\mathcal U)}.
\end{multline*}
Thus, 
\begin{eqnarray*}
	\begin{aligned}
		& \sum_{m=0}^{+\infty}   L_{\rho,m}^2\norm{\tau^\ell  (\partial_x^m v) \partial_y u}_{L^2}\norm{\tau^{\ell}\partial_x^mu}_{L^2}\\
		&\leq   \frac{C}{\rho^{5}}|\vec a|_{X_\rho}^3+  C  \sum_{m=2}^{+\infty}   L_{\rho,m}^2\norm{\tau^{\ell+1} \partial_yu}_{L_x^\infty L_y^2 }\norm{ \partial_x^{m-2}\lambda}_{L^2}\norm{\tau^{\ell}\partial_x^mu}_{L^2} \\
		&\quad+ C  \sum_{m=2}^{+\infty}   L_{\rho,m}^2\norm{\tau^{\ell+1} \partial_yu}_{L_x^\infty L_y^2}\norm{  \partial_x^{m-2}\mathscr L(u,\mathcal U)}_{L_x^2 L_y^\infty}\norm{\tau^{\ell}\partial_x^mu}_{L^2}\\
		&\leq C|\vec a|_{X_\rho}^3+C|\vec a|_{X_\rho}\sum_{m=2}^{+\infty}    \frac{(m+2)^{3/2}}{\rho^{3/2}}L_{\rho,m+1}\norm{ \partial_x^{m-2}\mathscr L(u,\mathcal U)}_{L_x^2 L_y^\infty}\frac{(m+1)^{1/2}}{\rho^{1/2}}L_{\rho,m}\norm{\tau^{\ell}\partial_x^mu}_{L^2}\\
		&\leq C \inner{|\vec a|_{X_\rho}+|\vec a|_{X_\rho}^2}  |\vec a|_{Y_\rho}^2,
	\end{aligned}
\end{eqnarray*}
where we have used  \eqref{prest} in  the last inequality.  By substituting the above estimates and \eqref{fiest}-\eqref{seest} into \eqref{dps},  and  by using the fact that
 $\rho'\leq\rho'^3\leq \rho'^3/4\leq 0$, we have that
\begin{multline*}
  \frac{1}{2} \frac{d}{dt}  \sum_{m=0}^{+\infty} L_{\rho,m}^2 \norm{\tau^{\ell}\partial_x^mu}_{L^2}^2+     \sum_{m=0}^{+\infty} L_{\rho,m}^2  \|\tau^{\ell}\partial_y \partial_x^mu \|_{L^2}^2+\frac12  \sum_{m=0}^{+\infty} L_{\rho,m}^2  \| \tau^{\ell}\partial_x^mu  \|_{L^2}^2\\
  \leq \frac14 \rho'^3\sum_{m=0}^{+\infty} \frac{m+1}{\rho} L_{\rho,m}^2 \norm{\tau^{\ell}\partial_x^mu}_{L^2}^2+C\big( |\vec a|_{X_\rho} +|\vec a|_{X_\rho}^2\big) |\vec a|_{Y_\rho}^2.
 \end{multline*}
Thus,  Proposition \ref{prop:u} holds for $k=0.$

\medskip
\noindent
{\bf (b) The case of \boldmath $k=3$}. 
We apply $ \partial_x^m \partial_y^{2} $ to the first equation in \eqref{2dprandtl} 
to obtain that
\begin{eqnarray}\label{umau4}
	 \partial_t  \partial_x^m \partial_y^{2}u
	-  \partial_x^{m}\partial_y^{4}u+  \partial_x^{m}\partial_y^{2}u= 
	 -\partial_x^m\partial_y^{2}\big(u\partial_{x}u+ v \partial_{y}u\big). 
\end{eqnarray}
The assumption \eqref{apasu} implies that each term in the above equation belongs to $  L^2\Big([0,+\infty[; L^2( {\comi y^{\ell+3}})\Big)$ with
\begin{eqnarray*}
	L^2( {\comi y^{\ell+3}}):=\set{f\in \mathcal S';\  \comi y^{\ell+3}f\in L^2}.
\end{eqnarray*}
 Then we multiply  both sides of \eqref{umau4} by $-\partial_y\Big(\tau^{2 (\ell+3)} \partial_x^m\partial_y^{3}u\Big)\in L^2\Big([0,+\infty[; L^2(\comi y^{-(\ell+3)})\Big)$, and then integrate over $\mathbb R_+^2$. 
 By using $\partial_y^{2}u |_{y=0}=0 $, we have that
 \begin{equation*}
\begin{aligned}
& \frac{1}{2}\frac{d}{dt}\norm{\tau^{\ell+3}\partial_x^{m}\partial_y^{3}u}_{L^2}^2+\norm{                  \tau^{\ell+3}\partial_x^{m}\partial_y^{4}u }_{L^2}^2+\norm{\tau^{\ell+3}\partial_x^{m}\partial_y^{3}u}_{L^2}^2 \\
&=  -\int_{\mathbb R_+^2}\big(\partial_x^{m}\partial_y^{4}u\big)(\partial_y\tau^{2\ell+6})\partial_x^{m}\partial_y^{3}u\, dxdy +\int_{\mathbb R_+^2} \Big[\partial_x^m\partial_y^{2}\big(u\partial_xu+v\partial_yu\big) \Big]\partial_y\Big(\tau^{2 (\ell+3)} \partial_x^m\partial_y^{3}u\Big)\, dxdy. \end{aligned}
\end{equation*}
For the terms on the right hand side of the above equality,  we have  that
\begin{eqnarray*}
  -\int_{\mathbb R_+^2}\big(\partial_x^{m}\partial_y^{4}u\big)(\partial_y\tau^{2\ell+6})\partial_x^{m}\partial_y^{3}u\,dxdy
		\leq \frac{1}{2}\inner{\| \tau^{\ell+3}\partial_x^m  \partial_y^4 u\|_{L^2}^2 + \| \tau^{\ell+3}\partial_x^m  \partial_y^3 u \|_{L^2}^2},
		\end{eqnarray*}
where we have used the fact that $|\partial_y\tau^{2\ell+6}|\leq \frac{\ell+3}{N^{1/2}} \tau^{2\ell+6}\leq  \tau^{2\ell+6}$, and
\begin{multline*}
	\int_{\mathbb R_+^2} \Big[\partial_x^m\partial_y^{2}\big(u\partial_xu+v\partial_yu\big) \Big]\partial_y\Big(\tau^{2 (\ell+3)} \partial_x^m\partial_y^{3}u\Big)\,dxdy\\
	\leq \frac{1}{4}\inner{\| \tau^{\ell+3}\partial_x^m  \partial_y^4 u\|_{L^2}^2 + \| \tau^{\ell+3}\partial_x^m  \partial_y^3 u \|_{L^2}^2}+4\norm{\tau^{\ell+3}\partial_x^m\partial_y^{2}\big(u\partial_xu+v\partial_yu\big) }_{L^2}^2.
\end{multline*}
Hence,
\begin{eqnarray*} 
\begin{aligned}
&\frac{1}{2}\frac{d}{dt}\sum_{m=0}^{+\infty}L_{\rho,m+3}^2\norm{\tau^{\ell+3}\partial_x^{m}\partial_y^{3}u}_{L^2}^2+\frac{1}{4}\sum_{m=0}^{+\infty}L_{\rho,m+3}^2\norm{ \tau^{\ell+3}\partial_x^{m}\partial_y^{4}u  }_{L^2}^2 +\frac14\sum_{m=0}^{+\infty}L_{\rho,m+3}^2\norm{\tau^{\ell+3}\partial_x^{m}\partial_y^{3}u}_{L^2}^2\\ 
& \leq  \rho^{\prime}\sum_{m=0}^{+\infty}\frac{m+4}{\rho}L_{\rho,m+3}^2\norm{\tau^{\ell+3}\partial_x^{m}\partial_y^{3}u}_{L^2}^2 
 +4\sum_{m=0}^{+\infty}  L_{\rho,m+3} ^2\norm{\tau^{\ell+3}\partial_x^m\partial_y^{2}\big(u\partial_xu+v\partial_yu\big) }_{L^2}^2.
\end{aligned}
\end{eqnarray*} 
Moreover, similarly to the proof of Lemma \ref{lem2},  we can show that
\begin{eqnarray*}
4 \sum_{m=0}^{+\infty}  L_{\rho,m+3} ^2\norm{\tau^{\ell+3}\partial_x^m\partial_y^{2}\big(u\partial_xu+v\partial_yu\big) }_{L^2}^2 \leq C|\vec a|_{X_\rho}^4\leq C|\vec a|_{X_\rho}^2|\vec a|_{Y_\rho}^2.\end{eqnarray*}
Therefore, by noting that  $\rho'\leq\rho'^3\leq \rho'^3/4,$ it holds that
\begin{eqnarray*}
	\begin{aligned}
		&\frac{1}{2}\frac{d}{dt}\sum_{m=0}^{+\infty}L_{\rho,m+3}^2\norm{\tau^{\ell+3}\partial_x^{m}\partial_y^{3}u}_{L^2}^2+\frac{1}{4}\sum_{m=0}^{+\infty}L_{\rho,m+3}^2\norm{ \tau^{\ell+3}\partial_x^{m}\partial_y^{4}u  }_{L^2}^2 +\frac{1}{4}\sum_{m=0}^{+\infty}L_{\rho,m+3}^2\norm{\tau^{\ell+3}\partial_x^{m}\partial_y^{3}u}_{L^2}^2\\  
		& \leq \frac14 \rho'^{3}\sum_{m=0}^{+\infty}\frac{m+3}{\rho}L_{\rho,m+3}^2\norm{\tau^{\ell+3}\partial_x^{m}\partial_y^{3}u}_{L^2}^2+C|\vec a|_{X_\rho}^2|\vec a|_{Y_\rho}^2.
	\end{aligned}
\end{eqnarray*}
This proves  Proposition \ref{prop:u} for $k=3$.

 The cases of when $k=1, 2$ can discussed similarly,  so we omit the details, for brevity. 
The proof of proposition    is complete. 
\end{proof}
 
\subsection{A priori estimate in 2D} \label{subsec:com}

We will apply the following abstract version of the bootstrap principle given in \cite{MR2233925}
to prove Theorem \ref{thmapri}:

 \begin{proposition}[Proposition 1.21 of  \cite{MR2233925}]\label{probootst}
Letting $I$ be a time interval,  and for   each $t\in I$  we have two  statements: a ``hypothesis" $\boldsymbol{H}(t)$ and  a ``conclusion" $\boldsymbol{C}(t)$. Suppose that we can verify the following four statements:  
\begin{enumerate}[(i)]
\item  If $		\boldsymbol{H}( t)$ is true for some time $t\in I$, then  $\boldsymbol{C}( t) $  is also true for the time $t.$
		\item If   $\boldsymbol{C}(t)$ is true for some $t\in I$, then   $\boldsymbol{H}(t')$ holds for all $t'$ in a neighborhood of  $t$.
		\item If $t_1,t_2, \ldots$ is a sequence of times in $I$ which converges to another time $t\in I$ and     $\boldsymbol{C}(t_n)$ is true for all $ t_n$, then   $\boldsymbol{C}(t)$ is true.
		\item  $\boldsymbol{H}(t)$ is true for at least one time  $t\in I.$
	\end{enumerate}
	Then   $\boldsymbol{C}( t)$ is true for all $t\in I.$
\end{proposition}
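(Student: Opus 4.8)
The plan is to run the standard topological (``continuity'') argument that underlies the bootstrap method: one studies the set of times at which the conclusion holds and shows it is relatively clopen in $I$. Concretely, I would set
\begin{eqnarray*}
	S=\set{t\in I;\ \boldsymbol{C}(t)\ \textrm{is true}},
\end{eqnarray*}
and prove that $S$ is nonempty, relatively open, and relatively closed in $I$. Since a time interval is connected, the only nonempty relatively clopen subset of $I$ is $I$ itself, so $S=I$, which is precisely the assertion that $\boldsymbol C(t)$ holds for every $t\in I$.

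First I would establish nonemptiness and closedness, which are immediate from hypotheses (iv), (i), and (iii). By (iv) there exists $t_0\in I$ with $\boldsymbol H(t_0)$ true, and then (i) applied at $t_0$ yields $\boldsymbol C(t_0)$, so $t_0\in S$ and $S\neq\emptyset$. For closedness, if $\set{t_n}_{n\geq 1}\subset S$ converges to some $t\in I$, then $\boldsymbol C(t_n)$ is true for all $n$, and (iii) gives $\boldsymbol C(t)$; hence $t\in S$ and $S$ is sequentially closed in $I$.

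The step carrying the actual content is openness, and here the point is that hypotheses (ii) and (i) must be \emph{composed}, not used in isolation. Fix $t\in S$, so $\boldsymbol C(t)$ holds; hypothesis (ii) then upgrades this to the stronger statement that $\boldsymbol H(t')$ holds for all $t'$ in some neighborhood of $t$ in $I$, and applying (i) at each such $t'$ converts $\boldsymbol H(t')$ back into $\boldsymbol C(t')$, so that entire neighborhood is contained in $S$. Thus $S$ is relatively open. The subtlety I would emphasize is that (ii) by itself only propagates the \emph{hypothesis} $\boldsymbol H$, not the conclusion $\boldsymbol C$, to nearby times, so the return passage through (i) is indispensable---this composition is the crux of the whole principle.

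To conclude, I would invoke the connectedness of $I$: a subset of a connected space that is at once nonempty, open, and closed must be the whole space, whence $S=I$ and $\boldsymbol C(t)$ is true for all $t\in I$. The only implicit input is that $I$, being a time interval, is connected; no analytic or quantitative estimate enters, which is exactly why the statement is an abstract (purely topological) principle. I do not anticipate any genuine obstacle in this deduction---the entire difficulty of the method lies in \emph{verifying} the four hypotheses (i)--(iv) for a concrete problem (as carried out in Subsections \ref{subseaux}--\ref{subsec:com}), whereas passing from them to the conclusion is this short connectedness argument.
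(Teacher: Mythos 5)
Your argument is correct, and it is essentially the canonical proof: the paper itself gives no proof of this proposition (it is quoted verbatim as Proposition 1.21 of Tao's book \cite{MR2233925}), and Tao's proof there is exactly your clopen--connectedness argument --- the set of times where $\boldsymbol{C}$ holds is nonempty by (iv)+(i), relatively closed by (iii), relatively open by composing (ii) with (i), hence all of the connected interval $I$. Your emphasis that (ii) alone only propagates $\boldsymbol{H}$ and must be followed by (i) to recover $\boldsymbol{C}$ is precisely the right point, and no gap remains.
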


 For each $T\in [0,+\infty[$, let $\boldsymbol{H}(T)$	be the statement 
\begin{equation}\label{has}
 \forall \ t\in[0, T],\quad 	e^{t/ 2}|\vec a(t)|_{X_{\rho(t)}}^2+\frac14\int_0^t 	e^{s/ 2}\abs{\vec a(s)}_{Z_{\rho(s)}}^2ds\leq    \frac{2\big (1+\rho_0^2\big )}{ \rho_0^2}\eps_0^2,
 \end{equation}
  and let  $\boldsymbol{C}(T)$	be the statement  
\begin{equation}\label{assc}
	\forall \ t\in[0, T],\quad 	e^{t/ 2}|\vec a(t)|_{X_{\rho(t)}}^2+\frac14\int_0^t 	e^{s/ 2}\abs{\vec a(s)}_{Z_{\rho(s)}}^2ds\leq    \frac{1+\rho_0^2}{\rho_0^2}\eps_0^2,
\end{equation}
where $\rho_0,\eps_0$ are the constants given in Theorem \ref{th:ape}. 
In the   discussion that follows, we will verify that the conditions (i)-(iv) in Proposition \ref{probootst} are satisfied by $\boldsymbol{H}(T)$ and $\boldsymbol{C}(T)$ defined as in \eqref{has} and \eqref{assc}.  
Note that  $\vec a|_{t=0}=(u_0, 0,\partial_x^3u_0)$. Thus,  $\boldsymbol{H}(0)$ holds because of  \eqref{smallness} and the fact that 
\begin{multline}\label{intianorm}
	|\vec a(0)|_{X_{\rho_0}}^2 = \norm{u_0}_{X_{\rho_0}}^2+\sum_{m=0}^{+\infty}L_{\rho_0,m+2}^2\norm{\partial_x^{m+3}u_0 }_{L^2}^2\\\leq \norm{u_0}_{X_{\rho_0}}^2+\sum_{m=0}^{+\infty}\frac{(m+3)^4}{4\rho_0^2}4^{-(m+3)}L_{2\rho_0,m+3}^2\norm{\partial_x^{m+3}u_0 }_{L^2}^2\leq \frac{\rho_0^2+1}{\rho_0^2} \norm{u_0}_{X_{2\rho_0}}^2.
\end{multline}
 Hence, the condition (iv) in Proposition \ref{probootst} holds. Moreover,    the conditions (ii)-(iii)
  follow from the  continuity of 
   the function 
  \begin{eqnarray*}
  	t\mapsto e^{t/ 2}|\vec a(t)|_{X_{\rho(t)}}^2+\frac14\int_0^t 	e^{s/ 2}\abs{\vec a(s)}_{Z_{\rho(s)}}^2ds.
  \end{eqnarray*}
   It remains to check the condition (i) in Proposition \ref{probootst}; that is that   \begin{eqnarray*}
   		\boldsymbol{H}( T) \ \textrm{is true for some time } T>0 \Longrightarrow \boldsymbol{C}( T) \ \textrm{is also true for the same time } T.
   \end{eqnarray*}
     By Definition \ref{defabnorm}, we  combine the estimates given in Propositions \ref{prpaux}  and \ref{prop:u} to get that
     \begin{eqnarray*}
     \begin{aligned}
& \frac12 	\frac{d}{dt}|\vec a|_{X_\rho}^2+\frac14|\vec a|_{Z_\rho}^2+\frac14|\vec a|_{X_\rho}^2 \\
&\qquad+ \frac14 \frac{d}{dt}\sum_{m=0}^{+\infty}    \rho'^2 \frac{(m+4)^2}{\rho^2}L_{\rho,m+3}^2\norm{   \partial_{x}^{m}\mathcal U}_{L^2}^2+\frac{1}{8} \sum_{m=0}^{+\infty} \rho'^2 \frac{(m+4)^2}{\rho^2}  L_{\rho,m+3} ^2\norm{ \partial_{x}^{m}\mathcal U }_{L^2}^2\\
& \leq \frac14\rho'^3|\vec a|_{Y_\rho}^2+ C\inner{ |\vec a|_{X_\rho}+  |\vec a|_{X_\rho}^2} |\vec a|_{Z_\rho}^2 + C\inner{ |\vec a|_{X_\rho}+  |\vec a|_{X_\rho}^2} |\vec a|_{Y_\rho}^2 \\
&\leq C\big( |\vec a|_{X_\rho}+  |\vec a|_{X_\rho}^2\big)|\vec a|_{Z_\rho}^2+  \Big(\frac14\rho'^3+C|\vec a|_{X_\rho} +C|\vec a|_{X_\rho}^2\Big)|\vec a|_{Y_\rho}^2.
  	\end{aligned}
  \end{eqnarray*}   
Note that 
\begin{eqnarray*}
	C\big( |\vec a|_{X_\rho}+  |\vec a|_{X_\rho}^2\big)|\vec a|_{Z_\rho}^2\leq 4C\frac{1+\rho_0^2}{\rho_0^{2}} \eps_0|\vec a|_{Z_\rho}^2 \leq  \frac{1}{8}|\vec a|_{Z_\rho}^2 
\end{eqnarray*}
and that
\begin{eqnarray*}
	\frac14\rho'^3+C|\vec a|_{X_\rho} +C |\vec a|_{X_\rho}^2\leq -\frac{1}{4}\bigg(\frac{\rho_0}{24}\bigg)^3e^{-t/4}+ 4C\frac{1+\rho_0^2}{\rho_0^{2}}e^{-t/4}\eps_0\leq 0,
\end{eqnarray*}
provided  $\eps_0$ is sufficiently small.   Combining the above estimates implies that
\begin{eqnarray*}
	 \frac12 	\frac{d}{dt} e^{t/2}|\vec a|_{X_\rho}^2+\frac18 e^{t/2}|\vec a|_{Z_\rho}^2 + \frac14 \frac{d}{dt} e^{t/2} \sum_{m=0}^{+\infty}    \rho'^2 \frac{(m+4)^2}{\rho^2}L_{\rho,m+3}^2\norm{   \partial_{x}^{m}\mathcal U}_{L^2}^2\leq 0.
\end{eqnarray*}
 By integrating the above estimate over $[0,t]$ for any $t\in[0,T]$ and using $\mathcal U|_{t=0}=0$, we obtain that
  \begin{eqnarray*} 
 \forall\ t\in [0,T],\quad 
  e^{t/2}  |\vec a(t)|_{X_{\rho(t)}}^2+\frac14\int_0^t  e^{s/2}  \abs{\vec a(s)}_{Z_{\rho(s)}}^2 ds \leq  |\vec a(0)|_{X_{\rho_0}}^2
 \leq \frac{1+\rho_0^2}{\rho_0^2}\eps_0^2,
    \end{eqnarray*}
    where we have used \eqref{intianorm}.  
  This yields \eqref{assc} if \eqref{has} holds, so that   the condition (i) holds. Therefore, by Proposition \ref{probootst},  the estimate \eqref{assc} holds  for any $T\geq 0,$   and the proof of Theorem \ref{thmapri} is complete.

\section{A priori estimate in 3D}
 The discussion on the 3D magnetic Prandtl model is  similar to that of the 2D case. 
 For this,  we will use vector-valued auxiliary functions instead of the scalar ones used in the previous section.  More precisely,  
denote by $\boldsymbol{u}=(u_{1},u_{2})$ and $v$  the tangential and normal velocities, respectively,   and   by $(x,y) $ the spatial variables in $\mathbb R^2\times \mathbb R_+ $ with $x=(x_{1},x_{2})$.    As the counterparts   of $ \mathcal U$ and $\lambda$ defined by \eqref{mau} and  \eqref{ma},  we define   
    $ \boldsymbol{ \mathcal U}=(\mathcal U_1,\mathcal U_2)$ and     $\boldsymbol{  \lambda } =(\lambda_1,\lambda_2,\lambda_3,\lambda_4) $  as follows: let $\mathcal U_j, j=1,2,$ solve the initial-boundary problem  
 \begin{eqnarray*}
 \left\{
 \begin{aligned}
 & \big(\partial_t+\boldsymbol{u}\cdot \partial_x  +v\partial_y-\partial_y^2\big)    \int_0^y\mathcal U_j(t,x,\tilde y)  d\tilde y+\int_0^y\mathcal U_j (t,x,\tilde y) d\tilde y  =  -\partial_{x_j}^3 v,\\
 & \mathcal U_j|_{t=0}=0, \quad \partial_y\mathcal U_j|_{y=0}=\mathcal U_j|_{y\rightarrow+\infty}=0.
 \end{aligned}
 \right.
 \end{eqnarray*} 
 Accordingly,
 set
 \begin{eqnarray*}
 	\left\{
 	\begin{aligned}
 		\lambda_1&=\partial_{x_1}^3u_1- (\partial_yu_1)\int_0^y\mathcal U_1 (t,x,\tilde y) d\tilde y,\quad 
 		\lambda_2=\partial_{x_2}^3u_1- (\partial_yu_1)\int_0^y\mathcal U_2 (t,x,\tilde y) d\tilde y,\\
 		\lambda_3&=\partial_{x_1}^3u_2-  (\partial_yu_2)\int_0^y\mathcal U_1 (t,x,\tilde y) d\tilde y,\quad  \lambda_4=\partial_{x_2}^3u_2-  (\partial_yu_2)\int_0^y\mathcal U_2(t,x,\tilde y)  d\tilde y.
 	\end{aligned}
 	\right.
 \end{eqnarray*}	
 Denote  that
 $$ \vec{ \boldsymbol{ a}} =( \boldsymbol{u}, \boldsymbol{  \mathcal U}, \boldsymbol{\lambda}),$$
 and  define $|\vec{ \boldsymbol{ a}}|_{X_\rho},|\vec{ \boldsymbol{ a}}|_{Y_\rho} $ and $|\vec{ \boldsymbol{ a}}|_{Z_\rho}$ as in Definition \ref{defabnorm}.
 
 Then the{\it  a priori} estimate  in Theorem \ref{thmapri}  also holds for the function $ \vec{ \boldsymbol{ a}} $ as stated in the following theorem:
 
 \begin{theorem}
 	\label{th:tape}
 	Suppose that the initial datum $ \boldsymbol{u}_0  $ in \eqref{dpe}  belongs to  $X_{2\rho_0}$ for some $ \rho_0>0$. Let $ \boldsymbol{u}\in L^\infty \inner{[0,+\infty[;\  X_{ \rho}}$ be a solution to \eqref{dpe}  satisfying that
 	\begin{equation*}
 	\int_0^\infty \big(\norm{ \boldsymbol{ u}(t)}_{X_{\rho(t)}}^2+\norm{\partial_y  \boldsymbol{u}(t)}_{X_{\rho(t)}}^2\big) dt<+\infty, 
 	\end{equation*}
 	where  $\rho$ is defined by  \eqref{derho}.  If  $ \norm{ \boldsymbol{u}_0}_{X_{2\rho_0}}\leq \varepsilon_0 $ for  some    sufficiently small $\eps_0>0$,
 	then   we have that
 	\begin{eqnarray*}
 		\sup_{t\geq 0} e^{t/4}|\vec{ \boldsymbol{ a}}(t)|_{X_{\rho(t)}}+\bigg(\int_0^{+\infty}e^{t/2}|\vec{ \boldsymbol{ a}}(t)|_{Z_{\rho(t)}}^2dt\bigg)^{1/2}\leq \frac{4(\rho_0+1)}{\rho_0}  \eps_0.
 	\end{eqnarray*}
 	  Thus, 
 	\begin{eqnarray*}
 		\sup_{t\geq 0} e^{t/4}| \boldsymbol{ u}(t)|_{X_{\rho(t)}}+\bigg(\int_0^{+\infty}e^{t/2}| \partial_y\boldsymbol{ u}(t)|_{X_{\rho(t)}}^2dt\bigg)^{1/2}\leq \frac{4(\rho_0+1)}{\rho_0}  \eps_0.
 	\end{eqnarray*}
 \end{theorem}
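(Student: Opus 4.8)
The plan is to transcribe the 2D argument of Section \ref{sec2} at the structural level, replacing each scalar estimate by its vector-valued analogue. First I would record the evolution equations for the components of $\boldsymbol{\lambda}$ and $\boldsymbol{\mathcal U}$. Writing $P=\partial_t+\boldsymbol u\cdot\partial_x+v\partial_y-\partial_y^2+1$ for the 3D Prandtl operator (the analogue of \eqref{def:p}), and recalling that $\mathcal U_i$ is built so that $P\int_0^y\mathcal U_i=-\partial_{x_i}^3 v$, one checks exactly as in the derivation of \eqref{llamd+}--\eqref{llamd++} that each $\lambda$ solves $P\lambda=H$ with a forcing $H$ that is a finite sum of products of low tangential derivatives of $\boldsymbol u$ with $\boldsymbol\lambda$, $\boldsymbol{\mathcal U}$ and their vertical primitives. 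The crucial feature inherited from the scalar case is that the top-order contribution $(\partial_{x_i}^3 v)\,\partial_y u_k$ coming from the commutator $[\partial_{x_i}^3,v\partial_y]u_k$ is exactly cancelled by $P\big[(\partial_y u_k)\int_0^y\mathcal U_i\big]$, provided the index $i$ of the auxiliary function matches the direction of the extracted derivative; hence $H$ loses no tangential derivative.

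Next I would establish the vector analogue of Proposition \ref{prpaux}. The identity \eqref{lamu}, relating the tangential derivative of the cubic part of $\boldsymbol\lambda$ to $P\boldsymbol{\mathcal U}$ modulo the commutator $K_m$ of \eqref{rekm}, holds componentwise, so the pair $(\boldsymbol{\mathcal U},\partial_x\boldsymbol\lambda)$ again plays the role of $(h,g)$ in the toy model \eqref{toymodel}: one tangential derivative is lost through a double application of $P$, which the hyperbolic-type bookkeeping of \eqref{tla}--\eqref{kee} converts into the harmless $\rho'^3$ term. The energy estimates for $\boldsymbol\lambda$ and $\boldsymbol{\mathcal U}$, the commutator bound on $K_m$, and the analogues of Lemmas \ref{lem2}--\ref{lem3} and Corollary \ref{corlam} go through with each $L^2$-pairing replaced by a finite sum over components and each Leibniz expansion running over multi-indices $\beta\le\alpha$ with $\alpha\in\mathbb Z_+^2$, $\abs\alpha=m$. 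The discrete convolutions \eqref{dis}--\eqref{conv} still close the sums: Vandermonde's identity gives $\binom{\alpha}{\beta}\le\binom{\abs\alpha}{\abs\beta}$, so every multi-index Leibniz sum is dominated by the corresponding scalar sum of Section \ref{sec2}, while the counting of the at most $O(m)$ multi-indices of each fixed order is absorbed by the polynomial weight $(k+1)^{10}$ built into $L_{\rho,k}$.

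I would then prove the vector analogue of Proposition \ref{prop:u} for $\norm{\boldsymbol u}_{X_\rho}$, treating the tangential and normal derivatives as in its two cases $k=0$ and $k=3$. The only term requiring care is the worst contribution of $v=-\int_0^y\partial_x\cdot\boldsymbol u$: for a multi-index $\alpha$ of high order one writes $\partial_x^{\alpha+e_j}u_k=\partial_x^{\alpha+e_j-3e_i}\big[\lambda+(\partial_y u_k)\int_0^y\mathcal U_i\big]$ by peeling off a pure third derivative $\partial_{x_i}^3$ in a direction $i$ with $(\alpha+e_j)_i\ge 3$, thereby reducing it to $\boldsymbol\lambda$ and the double primitive estimated in \eqref{prest}--\eqref{est:F}. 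Combining the two propositions as in Subsection \ref{subsec:com} and running the bootstrap Proposition \ref{probootst} with the same statements \eqref{has}--\eqref{assc} yields the bound on $\abs{\vec{\boldsymbol a}}_{X_\rho}$ and $\abs{\vec{\boldsymbol a}}_{Z_\rho}$; the second displayed inequality then follows from $\norm{\boldsymbol u}_{X_\rho}\le\abs{\vec{\boldsymbol a}}_{X_\rho}$ and $\norm{\partial_y\boldsymbol u}_{X_\rho}\le\abs{\vec{\boldsymbol a}}_{Z_\rho}$.

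The main obstacle is not conceptual but lies in organizing the coupling between the two velocity components $u_1,u_2$ and the four auxiliary functions $\lambda_1,\dots,\lambda_4$, together with the treatment of mixed tangential derivatives. Since the $\lambda_i$ only encode the pure third derivatives $\partial_{x_1}^3,\partial_{x_2}^3$, one must check that every sufficiently high-order multi-index $\alpha$ has a dominant direction $i$ with $\alpha_i\ge 3$ — indeed $\max_i\alpha_i\ge\lceil\abs\alpha/2\rceil$ — along which a $\partial_{x_i}^3$ can be peeled off, while the finitely many remaining low-order mixed indices (such as $(2,2)$) are absorbed directly into $\norm{\boldsymbol u}_{X_\rho}$ as lower-order terms. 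Verifying that this peeling, performed simultaneously for all four components and with the index of $\mathcal U_i$ correctly matched to the extracted direction, leaves no cross term of order higher than $\boldsymbol\lambda$ or $\boldsymbol{\mathcal U}$ is the one place where the 3D computation genuinely departs from the scalar one; everything else is a direct transcription.
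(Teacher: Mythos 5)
Your proposal follows essentially the same approach as the paper: the paper defines exactly the same vector-valued auxiliary functions $\boldsymbol{\mathcal U}=(\mathcal U_1,\mathcal U_2)$ and $\boldsymbol{\lambda}=(\lambda_1,\dots,\lambda_4)$ built from the pure third derivatives $\partial_{x_1}^3,\partial_{x_2}^3$, and then simply asserts that the proof "is the same as that of the 2D case," omitting every detail you spell out. Your transcription is correct, and your handling of mixed tangential derivatives --- peeling off a pure third derivative in a dominant direction, which is always possible at high order since $\max_i\alpha_i\ge |\alpha|/2$, so that the weight bookkeeping matches the $\lambda$-components of the norm exactly --- supplies precisely the one genuinely 3D point that the paper's omitted argument must also contain.
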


  	The proof  of Theorem \ref{th:tape} is the same as that of the  2D case,  so, for brevity,   we omit the details .

 \bigskip

{\bf Acknowledgements.}
W.-X. Li's research was supported by NSF of China (Nos. 11871054, 11961160716, 12131017)  and  the  Natural Science Foundation of Hubei Province (No. 
2019CFA007).  T. Yang's research was supported by  the General Research Fund of Hong Kong CityU No. 11304419.


\end{document}